\newtheorem{thm}{Theorem}
\numberwithin{thm}{section}
\newtheorem{lemma}[thm]{Lemma}
\newtheorem{proposition}[thm]{Proposition}
\newtheorem{question}[thm]{Question}
\theoremstyle{remark}
\newtheorem{rmk}[thm]{Remark}
\theoremstyle{definition}
\newtheorem{definition}[thm]{Definition}
\newcommand{\overl}[1]{\overline{#1}}
\newcommand{\R}{\mathbb{R}}
\newcommand{\C}{\mathbb{C}}
\newcommand{\Z}{\mathbb{Z}}
\newcommand{\Hy}{\mathbb{H}}
\newcommand{\HE}{\text{HE}}
\newcommand{\Aut}{\text{Aut}}
\newcommand{\Out}{\text{Out}}
\newcommand{\Mod}{\text{Mod}}
\newcommand{\Isom}{\text{Isom}}
\newcommand{\SL}{\text{SL}}
\newcommand{\G}{\Gamma}
\newcommand{\Sp}{\text{Sp}}
\title{On K\"ahler extensions of abelian groups}
\author{Corey Bregman and Letao Zhang \vspace{-2ex}}
\begin{document}
\maketitle
\begin{abstract}We show that any K\"ahler extension of a finitely generated abelian group by a surface group of genus $g\geq 2$ is virtually a product.  Conversely, we prove that any homomorphism of an even rank, finitely generated abelian group into the genus $g$ mapping class group with finite image gives rise to a K\"ahler extension.  The main tools come from surface topology and known restrictions on K\"ahler groups.
\end{abstract}
\section{Introduction}

Let $G$ be a finitely presented group.  If $G\cong \pi_1(X)$ for some compact K\"ahler manifold $X$, we say that $G$ is a \emph{K\"ahler group}.  In this paper we examine whether certain extensions of compact surface groups by abelian groups can arise as the fundamental groups of compact K\"{a}hler manifolds.  For $g\geq1$, let $\Sigma_g$ denote the orientable compact surface of genus $g$. We denote its fundamental group by $S_g.$ Every $\Sigma_g$ admits a complex structure, and in dimension 2 the K\"ahler condition is equivalent to being closed and orientable, hence every $S_g$ is a K\"ahler group. Another class of examples of K\"ahler groups come from even-dimensional tori, arising as $\C^n/\Lambda$ where $\Lambda\cong \Z^{2n}$ is a lattice.  Therefore, $\Z^{2n}$ is a K\"ahler group for all $n$, and when $n=1$, the quotient $\C/\Lambda$ is a surface of genus $g=1$.  We refer the reader to \cite{ABCKT96} for an excellent introduction to K\"ahler groups. 

Our main question concerns extensions of finitely generated abelian groups by $S_g$.  
\begin{question} Let $g\geq 2$ and suppose we have an short exact sequence of the form\begin{equation}\label{Extension}1\rightarrow S_g\rightarrow E\rightarrow A\rightarrow 1,\end{equation}
where $A$ is a finitely generated abelian group. Under what conditions is $E$ a K\"ahler group? 
\end{question}

We first make some preliminary observations.  If $G=\pi_1(X)$ and $H=\pi_1(Y)$ with $X$ and $Y$ K\"ahler manifolds, then $X\times Y$ is K\"ahler and hence so is $G\times H=\pi_1(X\times Y)$.  Therefore, since $S_g$ and $\Z^{2n}$ are each K\"ahler, $E=S_g\times \Z^{2n}$ gives one possible extension of $\Z^{2n}$ by $S_g$.  
Here we prove

\begin{thm}\label{Main} Let $g\geq2$, and suppose $E$ is a K\"ahler group which fits into an exact sequence of the form\[1\rightarrow S_g\rightarrow E\rightarrow A\rightarrow 1.\]Then there exists a finite index subgroup $E'\leq E$ such that $E'\cong S_g\times \Z^r$ with $r$ even.
\end{thm}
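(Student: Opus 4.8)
The plan is to isolate a single statement about the \emph{monodromy} of the extension and then to attack that. Write the given sequence as $1\to S_g\to E\xrightarrow{\pi}A\to 1$. Since $g\ge2$ the surface group $S_g$ has trivial center, so the conjugation action of $E$ on $S_g$ gives a map $E\to\Aut(S_g)$ carrying $S_g$ onto $\Inn(S_g)$; composing with $\Aut(S_g)\to\Out(S_g)$ kills $S_g$ and hence factors through a \emph{monodromy} homomorphism $\rho\colon A\to\Out(S_g)$, which by Dehn--Nielsen--Baer we regard as $\rho\colon A\to\Mod^{\pm}(\Sigma_g)$. I claim the theorem reduces to the assertion
\[(\star)\qquad\rho\text{ has finite image.}\]
Indeed, granting $(\star)$, the subgroup $E_1:=\ker(E\to\Out(S_g))=\ker(\rho\circ\pi)$ has finite index in $E$; for $e\in E_1$ the automorphism $\mathrm{conj}_e|_{S_g}$ is inner, say $\mathrm{conj}_e|_{S_g}=\mathrm{conj}_{s_e}$ for a \emph{unique} $s_e\in S_g$ (uniqueness by centerlessness), and $e\mapsto s_e$ is a retraction $E_1\to S_g$ whose kernel is $C_E(S_g)$. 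Since $C_E(S_g)$ commutes elementwise with $S_g$, the associated splitting realizes $E_1$ as an internal direct product $E_1=S_g\times C_E(S_g)$. Now $C_E(S_g)\cong E_1/S_g$ embeds in $A$, hence is finitely generated abelian, say $\cong\Z^r\oplus F$ with $F$ finite; replacing $E_1$ by $S_g\times\Z^r$ yields a finite-index subgroup $E'\le E$ with $E'\cong S_g\times\Z^r$. Finally $E'$ is again a K\"ahler group, being $\pi_1$ of a finite covering of a compact K\"ahler manifold with fundamental group $E$, so $b_1(E')=2g+r$ is even, whence $r$ is even. This is exactly the conclusion of the theorem.

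It remains to prove $(\star)$, which is the heart of the argument. Since the conclusion we want is a virtual one and finite covers of compact K\"ahler manifolds are compact K\"ahler, we may pass to finite-index subgroups of $E$ freely. Doing so we may assume $A\cong\Z^d$, in which case (as $H^2(\Z^d;Z(S_g))=0$) the extension is realized by a $\Sigma_g$-bundle over the torus $T^d$, and our task is to rule out $\rho$ having infinite image. Suppose it does; then $d\ge1$, and a surjection $\alpha\colon A\to\Z$ produces a surjection $E\to\Z$ whose kernel $\pi^{-1}(\ker\alpha)$ --- an extension of a finitely generated abelian group by $S_g$ --- is finitely generated. Letting $X$ be a compact K\"ahler manifold with $\pi_1(X)=E$, the Siu--Beauville--Catanese fibration theory, in the form developed by Delzant and by Napier--Ramachandran for maps to $\Z$ with finitely generated kernel, furnishes a finite \'etale cover $X'\to X$ and a surjective holomorphic map with connected fibers $f\colon X'\to C$ onto a compact orbifold Riemann surface $C$ through which (the restriction of) $\alpha$ factors. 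Then $\ker(\pi_1^{\mathrm{orb}}(C)\to\Z)$, being a quotient of the finitely generated group $\pi^{-1}(\ker\alpha)\cap\pi_1(X')$, is finitely generated; but a genus-$0$ orbifold group has finite abelianization and admits no surjection onto $\Z$, while if $C$ carries a hyperbolic structure --- in particular if it has genus $\ge2$, or genus $1$ with cone points --- then every surjection $\pi_1^{\mathrm{orb}}(C)\to\Z$ has infinitely generated kernel. Hence $C$ is a smooth elliptic curve. Carrying this out for a basis of $\mathrm{Hom}(A,\Z)$ over a common finite cover, combining the resulting elliptic fibrations via the Albanese map, and passing to one further finite cover, we obtain a compact K\"ahler manifold $\widehat X$ finitely covering $X$, together with a surjective holomorphic map to a base with infinite virtually abelian fundamental group whose general fiber is a curve of genus $\ge2$ and whose monodromy is a finite-index restriction of $\rho$. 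This contradicts the rigidity of such families (Arakelov and Viehweg--Zuo, together with the Torelli theorem): a non-isotrivial family of curves of genus $\ge2$ cannot live over a base with virtually abelian fundamental group, so the monodromy, and hence $\mathrm{im}\,\rho$, must be finite.

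The reductions that flank $(\star)$ --- the centralizer bookkeeping, the realization of the extension, and the evenness of $b_1$ for compact K\"ahler manifolds --- are routine. The real obstacle is internal to $(\star)$: passing from ``$E$ is K\"ahler and surjects onto $\Z$ with finitely generated kernel'' to a holomorphic fibration, tracking the various finite-index subgroups and orbifold structures carefully enough to pin the base down first to an elliptic curve and then to a torus, arranging the fibration to be a genuine bundle realizing $\rho$, and invoking the appropriate Arakelov/Viehweg--Zuo rigidity to forbid a non-isotrivial curve family over that torus. I expect this interplay between the structure theory of K\"ahler groups and the rigidity of curve families to be where the substance of the proof lies.
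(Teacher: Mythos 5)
Your reduction of the theorem to the claim $(\star)$ that the monodromy $\rho\colon A\to\Out(S_g)$ has finite image is correct, and it is exactly the frame of the paper's proof (centerlessness of $S_g$, the splitting $E_1=S_g\times C_E(S_g)$, and evenness of $b_1$ for the parity of $r$ are all handled the same way). The problem is the proof of $(\star)$ itself, which rests on a theorem quoted in the wrong direction. The Delzant/Napier--Ramachandran fibration results for characters say: if $X$ is compact K\"ahler and $\alpha\colon\pi_1(X)\to\Z$ has \emph{infinitely generated} kernel, then $\alpha$ factors through a holomorphic fibration onto a hyperbolic orbifold curve (equivalently, the complement of the BNS invariant of a K\"ahler group is a finite union of spheres coming from such fibrations). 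When the kernel is finitely generated --- which is our situation, since $\ker(\alpha\circ\pi)$ is an extension of a finitely generated abelian group by $S_g$ --- these theorems produce \emph{no} fibration at all; they are consistent with $X$ admitting no nonconstant holomorphic map to any curve (e.g.\ a simple abelian variety with a surjection $\Z^{2n}\to\Z$). So the holomorphic map $f\colon X'\to C$ you need does not exist in general, the deduction that $C$ is an elliptic curve is vacuous, and the subsequent Albanese/Arakelov--Viehweg--Zuo step has nothing to act on. A related secondary gap: even if $X$ did map onto a torus (say via the Albanese), there is no reason its fibers are curves of genus $\ge 2$ or that the resulting holomorphic monodromy computes the group-theoretic $\rho$, so isotriviality of families over elliptic bases cannot be brought to bear on $\rho$.

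For contrast, the paper proves $(\star)$ purely on the group/topology side: it runs the Nielsen--Thurston classification on the abelian subgroup $\rho(A)\le\Mod(\Sigma_g)$ (after Birman--Lubotzky--McCarthy), kills pseudo-Anosov pieces using Thurston hyperbolization plus Carlson--Toledo's theorem that maps from K\"ahler groups to cocompact lattices in $\Isom(\Hy^3)$ factor through surface groups (impossible here by cohomological dimension), and kills infinite-order reducible/multitwist pieces by passing to finite covers where the action on $H_1$ has infinite order (Hadari) and then invoking the Papadima--Suciu $1$-formality restriction forbidding Jordan blocks of size $\ge 2$ with eigenvalue $1$ for the $\Z$-action on $H_1$ of a finitely generated kernel. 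If you want to salvage your outline, the Papadima--Suciu step is the correct K\"ahler-theoretic input to pair with your exact sequence $1\to\pi^{-1}(\ker\alpha)\to E\to\Z\to 1$; the fibration theorems are not.
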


\begin{rmk} If $g=1$ the theorem is no longer true.  Indeed Campana \cite{Cam95} and Carlson--Toledo \cite{CaTo95} have shown that the Heisenberg group $\mathcal{H}_{2n+1}(\Z)$ is K\"ahler if and only if $n\geq 4$.  These are 2-step nilpotent, with center $\Z$ and abelianization $\Z^{2n}$.  Hence they are non-abelian but fit into exact sequences of the form:\[1\rightarrow \Z^2\rightarrow \mathcal{H}_{2n+1}(\Z)\rightarrow \Z^{2n-1}\rightarrow 1.\] 
\end{rmk}
Previously, versions of sequence (\ref{Extension}) were studied by Kapovich \cite{K13} and Py \cite{Py14} when $E$ is the fundamental group of an aspherical complex surface and $A=\Z^2$, in connection with coherence of complex hyperbolic lattices.  After we completed our paper, Pierre Py communicated an alternate proof of Theorem \ref{Main} using a different approach, in joint work with Gerardo Arizmendi.  

We also present a partial converse to Theorem \ref{Main}.  Any extension such as $E$ is determined by a homomorphism $\rho:A\rightarrow \Out(S_g)$, where $\Out(S_g)$ is the outer automorphism group of $S_g$.  By the Dehn--Nielsen--Behr theorem (see \cite{FaMa12}, Theorem 8.1), the mapping class group $\Mod(\Sigma_g)$ can be identified as an index 2 subgroup of $\Out(S_g)$.   After passing to a subgroup of finite index, we can obtain a representation $\rho:A\rightarrow \Mod(\Sigma_g)$.  

\begin{thm} \label{Realization}Suppose $A$ is a finitely generated abelian group with even rank, let $\rho:A\rightarrow \Mod(\Sigma_g)$ be a representation with finite image, and let $E$ be the corresponding extension. Then there exists a K\"ahler manifold $Z$ such that $\pi_1(Z)\cong E$.  Moreover, if $A$ is torsion-free then $Z$ can be taken to be aspherical.
\end{thm}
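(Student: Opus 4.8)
The plan is to build $Z$ as a free holomorphic quotient $(X\times\hat Y)/Q$ of a product of two K\"ahler manifolds by a finite abelian group $Q$, where $X$ is a Riemann surface realizing $\rho$ and $\hat Y$ is an auxiliary K\"ahler manifold built from $A$. The reason a product is forced is that a nontrivial finite-order element of $\Mod(\Sigma_g)$ is never realized by a fixed-point-free self-map of $\Sigma_g$, so to obtain a \emph{free} action --- needed to control $\pi_1$ --- one must thicken $\Sigma_g$ by a space on which $Q:=\rho(A)$ does act freely. Note that $Q$, being a quotient of the abelian group $A$, is finite abelian; write $K:=\ker\rho\le A$, so that $A/K\cong Q$.

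First I would apply Kerckhoff's solution of the Nielsen realization problem to the finite subgroup $Q\le\Mod(\Sigma_g)$: there is a complex structure $X$ on $\Sigma_g$ together with an injection $Q\hookrightarrow\Aut(X)$ inducing the inclusion $Q\hookrightarrow\Mod(\Sigma_g)$. Lifting this action to $\til X=\Hy^2$ and adjoining the deck group $S_g$ produces a cocompact Fuchsian group $\Gamma\le\Isom^+(\Hy^2)$ sitting in $1\to S_g\to\Gamma\to Q\to 1$, whose outer action $Q\to\Out(S_g)$ is the inclusion $Q\hookrightarrow\Mod(\Sigma_g)\hookrightarrow\Out(S_g)$ of Dehn--Nielsen--Baer \cite{FaMa12}. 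Second, I would produce a compact K\"ahler manifold with fundamental group $A$: writing $A\cong\Z^{2n}\oplus F$ with $F$ finite, take $Y:=T\times B$ where $T$ is an $n$-dimensional complex torus and $B$ is a smooth projective variety with $\pi_1(B)\cong F$, the latter existing by a classical construction of Serre; if $A$ is torsion-free we take $Y=T$, which is aspherical. Now pass to the covering $\hat Y\to Y$ corresponding to the subgroup $K\le\pi_1(Y)\cong A$: then $\hat Y$ is a compact K\"ahler manifold with $\pi_1(\hat Y)\cong K$ which \emph{automatically} carries a free holomorphic action of the deck group $Q=A/K$. This covering-space trick is what makes a free holomorphic $Q$-action available, and makes it the correct one.

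Then I would let $Q$ act diagonally on $X\times\hat Y$ (by $\Aut(X)$ on the first factor, by deck transformations on the second); this action is holomorphic and free, since it is already free on $\hat Y$, so $Z:=(X\times\hat Y)/Q$ is a compact complex manifold. The product of the uniformizing hyperbolic metric on $X$ (which is $Q$-invariant, being canonical) with the pullback to $\hat Y$ of any K\"ahler metric on $Y$ (which is $Q$-invariant, $Q$ being the deck group) is a $Q$-invariant K\"ahler metric that descends to $Z$, so $Z$ is K\"ahler --- and projective if $T$ is chosen to be an abelian variety and $X$ a projective curve. To compute $\pi_1(Z)$, the covering $X\times\hat Y\to Z$ gives $1\to S_g\times K\to\pi_1(Z)\to Q\to 1$, and a routine lifting argument on the universal cover $\til Z=\Hy^2\times\mathcal Y$ (with $\mathcal Y$ the universal cover of $Y$) shows that $\pi_1(Z)$ acts by products, its two coordinate projections land in $\Gamma$ and in $\pi_1(Y)\cong A$ respectively, and it is exactly the fiber product
\[\pi_1(Z)\;\cong\;\Gamma\times_Q A\;:=\;\{(\gamma,a)\in\Gamma\times A : \gamma\text{ and }a\text{ have equal image in }Q\}.\]

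The last step is to recognize $\Gamma\times_Q A$ as $E$, and this is where the only real subtlety lies. Projecting to the second factor presents $\Gamma\times_Q A$ as an extension of $A$ by $\ker(\Gamma\to Q)=S_g$, and a short conjugation computation shows its outer action $A\to\Out(S_g)$ factors as $A\twoheadrightarrow Q\hookrightarrow\Mod(\Sigma_g)\hookrightarrow\Out(S_g)$, i.e. equals $\rho$. Since $g\ge 2$, the surface group $S_g$ has trivial center, so an extension of $A$ by $S_g$ inducing a given outer action is unique up to equivalence; hence $\pi_1(Z)\cong\Gamma\times_Q A\cong E$. Finally, when $A$ is torsion-free both $X$ and $\hat Y$ (the latter being a complex torus) are aspherical, so $X\times\hat Y$ is aspherical, and since $Q$ acts freely, $\til Z=\Hy^2\times\mathcal Y$ is contractible and $Z$ is aspherical. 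Thus the only genuine obstacle is ensuring the construction yields precisely the extension $E$ rather than some other extension of $A$ by $S_g\times K$, which is forced by the centerlessness of $S_g$; the remaining inputs (Nielsen realization, Serre's construction, and the $\pi_1$ bookkeeping) are standard.
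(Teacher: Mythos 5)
Your proof is correct and follows essentially the same route as the paper: Kerckhoff realization for $Q=\rho(A)$, Serre's theorem for the torsion part of $A$, and a diagonal (Borel-type) quotient of a product, which the paper performs on the universal cover $\widetilde Y$ of a K\"ahler manifold with fundamental group $A$ rather than on your intermediate cover $\hat Y$ --- quotienting first by $K=\ker\rho$ shows the two descriptions yield the same manifold $Z$. Your explicit identification of $\pi_1(Z)$ with $E$ via the fiber product $\Gamma\times_Q A$ and the centerlessness of $S_g$ supplies a step the paper treats as immediate; the only flaw is the motivational aside that finite-order mapping classes are never realized by fixed-point-free homeomorphisms (free finite cyclic actions on $\Sigma_g$ exist), but nothing in your argument depends on it.
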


\noindent
\textbf{Acknowledgements.} The authors would like to thank the Mathematical Sciences Research Institute for its hospitality while this paper was being written.  The first author is extremely grateful to Mahan Mj for many illuminating discussions, to Misha Kapovich, for inspiring Theorem \ref{Realization}, and to Andy Putman, for suggesting the cover in Proposition \ref{PantsLift}. We are grateful to Pierre Py for explaining his and Gerardo Arizmendi's approach to proving Theorem \ref{Main}, and for pointing out reference \cite{BLM83} to us.  We would also like to thank Song Sun, Dieter Kotschick, and Brendan Hassett for several useful comments. The second author was partially supported through NSF grant DMS 1606387.  

\section{Restrictions on K\"ahler groups}
The proof of Theorem \ref{Main} will follow from a sequence of reductions, where in each case we restrict the range of possibilities for the extension $E$. Before going into the proof we need to discuss some known restrictions on K\"ahler groups. The first is that cocompact lattices in real hyperbolic space $\mathbb{H}^n$, $n\geq 3$ are not K\"ahler.  

\begin{thm}\label{hyperbolic}$($Carlson--Toledo \cite{CaTo89}$)$ Let $G$ be a K\"ahler group and $\Gamma\leq \Isom(\Hy^n)$ a cocompact lattice, for $n\geq 2$.  Then any homomorphism $\phi:G\rightarrow \G$ factors through a homomorphism $\phi':G\rightarrow S_g$ for some $g\geq 2.$ In particular, if $\Gamma\leq \Isom(\Hy^n)$ is a cocompact lattice for $n\geq 3$, then $\Gamma$ is not K\"ahler.
\end{thm}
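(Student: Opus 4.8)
The plan is to run the classical harmonic-map machinery of Eells--Sampson, Sampson, and Siu; the single genuinely special input will be the extreme degeneracy of the complexified curvature tensor of real hyperbolic space. Fix a compact K\"ahler manifold $X$ with $\pi_1(X)\cong G$ and let $M=\Hy^n/\G$, a closed real hyperbolic manifold. Since $M$ is aspherical, $\phi$ is, up to conjugation, induced by a continuous map $f\colon X\to M$, equivalently by a $\phi$-equivariant map $\tilde f\colon\tilde X\to\Hy^n$. If $\phi$ is not reductive then its image fixes a point of $\partial_\infty\Hy^n$ and hence lies in the amenable stabilizer of that point; an amenable subgroup of the word-hyperbolic group $\G$ is virtually cyclic, so I postpone this case. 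If $\phi$ is reductive then, because $\Hy^n$ is complete of nonpositive curvature and $X$ is compact, the equivariant form of the Eells--Sampson theorem (see e.g.\ Corlette, or Jost--Yau) produces a $\phi$-equivariant harmonic map $\tilde u\colon\tilde X\to\Hy^n$, descending to a harmonic map $u\colon X\to M$ homotopic to $f$, so $u_*=\phi$.

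By Sampson's theorem, a harmonic map from a compact K\"ahler manifold into a space of nonpositive Hermitian curvature---in particular $\Hy^n$---is pluriharmonic, so $\partial u$ is a holomorphic section of $\Omega^1_X\otimes u^*(TM\otimes\C)$. I would then feed $u$ into the Bochner--Siu--Sampson identity and integrate the curvature term over $X$ against a suitable power of the K\"ahler form: Stokes' theorem together with pluriharmonicity forces the integral to vanish, while the integrand is a sign-definite pointwise expression in the curvature tensor $R$ of $\Hy^n$ evaluated on $\partial u$. Because $R$ is of constant-curvature type, it is maximally degenerate in the complexified directions, so---in contrast with Siu's strong negativity, which would force $u$ to be holomorphic---the integrand is nonnegative and vanishes at a point precisely when $\partial u$ has complex rank at most $1$ there. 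Hence $\partial u$ has rank $\le 1$ everywhere, equivalently $du$ has real rank $\le 2$ at every point of $X$. This curvature computation is the heart of the matter, and the step I expect to be the main obstacle.

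If $u$ is constant then $\phi$ is trivial; otherwise, given that $\partial u$ has rank $\le 1$, on the complement $U$ of the analytic subset where $\partial u$ vanishes one can write $\partial u=\alpha\otimes e$ with $\alpha$ a local holomorphic $(1,0)$-form, unique up to scale. The line field $\ker\alpha$ defines a codimension-one holomorphic foliation of $U$ that extends across $X\setminus U$, and $u$ is constant along its leaves. A Castelnuovo--de Franchis type argument---Stein factorization of the map determined by this foliation---then yields a surjective holomorphic map with connected fibers $p\colon X\to C$ onto a smooth projective curve, together with a harmonic map $v\colon C\to M$ such that $u\simeq v\circ p$; producing $v$ and the homotopy requires a little equivariant bookkeeping on $\tilde X$, which I would not belabor. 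Since $p_*\colon G\to\pi_1(C)$ is onto and $\phi=v_*\circ p_*$, the theorem follows once $g':=\mathrm{genus}(C)\ge 2$: take $\phi'=p_*\colon G\to S_{g'}$ and $\psi=v_*$. If $g'\le 1$ then $v$ is a harmonic map from $S^2$ or from a flat torus into a negatively curved manifold, so its image is a point or a closed geodesic and $\phi(G)$ is trivial or infinite cyclic; together with the non-reductive case deferred above, the remaining homomorphisms have virtually cyclic image, and because the standard surjection $S_2\twoheadrightarrow\Z$ admits a section, every such homomorphism factors through $S_2$.

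For the final assertion, suppose $\G$ is K\"ahler; after replacing $\G$ by a torsion-free finite-index subgroup (still K\"ahler) we may assume $\G=\pi_1(M)$ for a closed hyperbolic $n$-manifold $M$ with $n\ge 3$. Apply the first part with $G=\G$ and $\phi=\mathrm{id}$: this is reductive with image $\G$, which is not virtually cyclic, so $g'\ge 2$ and the identity of $\G$ factors as $\G\xrightarrow{p_*}S_{g'}\xrightarrow{v_*}\G$; in particular $p_*$ is injective, realizing $\G$ as a subgroup of the surface group $S_{g'}$. But a subgroup of a surface group has cohomological dimension at most $2$, whereas $\mathrm{cd}(\G)=n\ge 3$ because $M$ is a closed aspherical $n$-manifold---a contradiction. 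Hence $\G$ is not K\"ahler.
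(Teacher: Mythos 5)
The paper offers no proof of this statement: it is quoted from Carlson--Toledo \cite{CaTo89}, so the only meaningful comparison is with the argument in the literature, which is essentially what you have reconstructed. Your main line --- equivariant harmonic map for reductive $\phi$, Sampson's pluriharmonicity, the Bochner/Siu--Sampson identity with the observation that for a constant-curvature target the curvature term is nonnegative and vanishes exactly where $\partial u$ has complex rank $\leq 1$, and the Siu--Carlson--Toledo factorization of the resulting rank-$\leq 2$ pluriharmonic map through a holomorphic fibration over a curve --- is the standard proof, and your deduction of the ``in particular'' clause from cohomological dimension is exactly how it is done (and is also how the present paper uses the theorem in Lemma \ref{NoFactorization}).

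The one genuine gap is the last sentence of your third paragraph, where you dispose of the degenerate cases. The section trick $\Z\hookrightarrow S_2\twoheadrightarrow\Z$ does factor a homomorphism with trivial or infinite cyclic image through $S_2$, but it does nothing for the other elementary possibilities, and those genuinely occur: $\phi$ may have finite non-cyclic image (such a representation is reductive --- a finite group fixes a point of $\Hy^n$ --- the harmonic map is constant, and the factorization machinery yields nothing), or image isomorphic to $\Z\times\Z/2$ when the image preserves a geodesic admitting an order-two rotation about it. Such a $\phi$ cannot factor through any $S_g$ with $g\geq 2$ at all, since abelian subgroups of $S_g$ are trivial or infinite cyclic; for instance $G=\Z^4$ is K\"ahler and can surject onto a Klein four-subgroup of a suitable cocompact $\G$. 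So this step is not repairable: the statement as quoted is slightly too strong, and what Carlson--Toledo actually prove is the dichotomy ``either $\phi(G)$ is elementary (virtually cyclic) or $\phi$ factors through a surface group.'' Your argument establishes precisely that dichotomy, which is all the paper ever needs --- its applications are to surjections onto fundamental groups of closed hyperbolic $3$-manifolds, whose images are far from virtually cyclic --- and which suffices for the non-K\"ahlerness of cocompact lattices in $\Isom(\Hy^n)$ for $n\geq 3$.
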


%

The second restriction comes from the fact that compact K\"ahler manifolds are formal in the sense of rational homotopy theory.  For an introduction to rational homotopy theory, see \cite{GrMo81}.  Let $X$ be a smooth manifold, and denote by $\Omega^*(X)$ the de Rham complex of smooth $\C$-valued differential forms on $X$ (we could take any coefficients in any field of characteristic 0). Rational homotopy theory associates to $X$ a `minimal' differential graded algebra (d.g.a.) $(\mathcal{M}^*(X),d)$ together with a chain map $\rho_X:\mathcal{M}^*(X)\rightarrow \Omega^*(X)$ inducing an isomorphism on $H^*(X;\C)\cong H^*(\Omega^*(X))$.  $X$ is said to be \emph{formal} if $ (\mathcal{M}^*(X),d)\cong (H^*(X,\C)$,0) i.e. the chain complex whose underlying group is $H^*(X;\C)$ and with differential identically 0 in all degrees. More generally, we have 

\begin{definition} \label{nformal} $X$ is said to be \emph{$n$-formal} if there exists a minimal d.g.a. $(\mathcal{M}^*,d)$ together with a chain map $\rho:\mathcal{M}^*\rightarrow \Omega^*(X)$ such that 
\begin{enumerate}
\item $\rho$ induces an isomorphism on cohomology in degrees $\leq n$, and an injection of the subring generated by $\bigoplus_{i=1}^nH^i(\mathcal{M}^*)$.
\item The differential $d$ is 0 in degrees $\leq n$.
\end{enumerate}
\end{definition}
Any $(\mathcal{M}^*,d)$ satisfying $(1)$ above is called an \emph{$n$-minimal model} for $X$.  Since $H^1(X;\C)\cong H^1(\pi_1(X);\C))$ this implies that if $X$ is 1-formal, then $\pi_1(X)$ is 1-formal.  Essentially this means that any 1-minimal model for $X$ and hence $\pi_1(X)$ is determined by $H^1(X)$ and the cup-product map $H^1(X)\times H^1(X)\rightarrow H^2(X)$. We have

\begin{thm}\label{formal}$($Deligne--Griffiths--Morgan--Sullivan \cite{DGMS75}$)$ If $X$ is a compact K\"ahler manifold then $X$ is formal. In particular, $\pi_1(X)$ is 1-formal.
\end{thm}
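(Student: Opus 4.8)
The plan is to deduce formality from Hodge theory via the $dd^c$-lemma (the ``principle of two types''), following the approach of Griffiths--Morgan. Write $d^c := i(\bar\partial - \partial)$, so that $d = \partial + \bar\partial$, $(d^c)^2 = 0$, and $dd^c = -d^cd = 2i\,\partial\bar\partial$. On a compact K\"ahler manifold the K\"ahler identities force $\Delta_d = \Delta_{d^c}$, so the $d$-harmonic and $d^c$-harmonic forms coincide; in particular every de Rham class has a representative that is simultaneously $d$-closed and $d^c$-closed. The one genuinely analytic input is the $dd^c$-lemma: if $\alpha \in \Omega^*(X)$ is $d$-closed, $d^c$-closed, and $d$-exact (equivalently, $d^c$-exact), then $\alpha \in \operatorname{im}(dd^c)$. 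I would obtain this from the Hodge decomposition together with the equality of Laplacians.

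Granting the lemma, the rest is homological. Set $\Omega^*_c := \ker(d^c) \subseteq \Omega^*(X)$. Since $d^c$ is a derivation anticommuting with $d$, $\Omega^*_c$ is a sub-d.g.a.\ of $(\Omega^*(X), d)$, and $d^c\Omega^*(X)$ is a $d$-stable ideal of $\Omega^*_c$ (because $d(d^c\eta) = -d^c(d\eta)$). I claim there is a zig-zag of d.g.a.\ homomorphisms
\[(\Omega^*(X),d)\ \xleftarrow{\ \iota\ }\ (\Omega^*_c,d)\ \xrightarrow{\ \pi\ }\ \bigl(\Omega^*_c / d^c\Omega^*(X),\,\bar d\bigr)\]
in which both arrows are quasi-isomorphisms and the target d.g.a.\ has vanishing differential. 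That $\bar d = 0$ is precisely the $dd^c$-lemma: for $\alpha \in \Omega^*_c$ the form $d\alpha$ is $d^c$-closed and $d$-exact, hence $d\alpha = dd^c\gamma = -d^c(d\gamma) \in d^c\Omega^*(X)$. That $\iota$ is a quasi-isomorphism uses, for surjectivity on cohomology, that $d$-harmonic representatives lie in $\Omega^*_c$, and for injectivity the $dd^c$-lemma again: a $d$-exact form $\alpha \in \Omega^*_c$ equals $dd^c\gamma = d(d^c\gamma)$ with $d^c\gamma \in \Omega^*_c$. That $\pi$ is a quasi-isomorphism follows by one more application of the $dd^c$-lemma and the $d^c$-Hodge decomposition, which together identify $H^*(\Omega^*_c,d)$ with $\Omega^*_c/d^c\Omega^*(X)$. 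The target is thus a d.g.a.\ with zero differential whose underlying graded algebra is, through the zig-zag, isomorphic to the de Rham cohomology ring $H^*(X;\C)$.

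To conclude that $X$ is formal, I would invoke the standard rectification principle of rational homotopy theory: the minimal model $\mathcal{M}^*(X)$ is free, hence cofibrant, so the quasi-isomorphism $\rho_X : \mathcal{M}^*(X) \to (\Omega^*(X),d)$ lifts along $\iota$ and then pushes forward along $\pi$ to a single d.g.a.\ quasi-isomorphism $\mathcal{M}^*(X) \to (H^*(X;\C), 0)$, which is the definition of formality. For the final assertion, the $1$-minimal model of $X$ --- equivalently of $\pi_1(X)$, since $H^1(X;\C) \cong H^1(\pi_1(X);\C)$ --- depends only on the quasi-isomorphism type of the truncation of $(\Omega^*(X),d)$ in degrees $\le 2$, and the $1$-minimal model of $(H^*(X;\C),0)$ depends only on $H^1$, $H^2$ and the cup product $H^1 \times H^1 \to H^2$; formality identifies the two, so $\pi_1(X)$ is $1$-formal.

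The main obstacle is the $dd^c$-lemma itself, together with the bookkeeping needed to verify that each of the three d.g.a.'s in the zig-zag computes de Rham cohomology and that $\iota$ and $\pi$ are genuine algebra maps --- in particular that $d^c\Omega^*(X)$ is a $d$-stable ideal inside $\Omega^*_c$ making the quotient a d.g.a.\ with vanishing differential, and that $\pi$ induces an isomorphism on cohomology. Once the analytic lemma is available the remaining steps are formal; the passage from ``zig-zag of quasi-isomorphisms'' to ``formal'' relies on cofibrancy of minimal models, which I would cite from \cite{GrMo81} rather than reprove. An alternative route, closer to the original \cite{DGMS75}, deduces formality from strictness of the Hodge filtration --- degeneration of the Fr\"olicher spectral sequence at $E_1$ together with purity of the Hodge structure on $H^*(X)$ --- and the $dd^c$ argument above is essentially a repackaging of it.
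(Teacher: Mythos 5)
Your proposal is correct: this statement is quoted in the paper without proof (it is the classical Deligne--Griffiths--Morgan--Sullivan theorem, cited from \cite{DGMS75}), and your argument --- the $dd^c$-lemma, the zig-zag $(\Omega^*(X),d)\leftarrow(\ker d^c,d)\rightarrow(\ker d^c/\operatorname{im}d^c,0)$ of quasi-isomorphisms, and lifting through the cofibrant minimal model --- is precisely the standard proof from that reference. There is nothing to compare beyond noting that your sketch faithfully reproduces the original approach.
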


For our purposes, it will not be important to calculate the 1-minimal model for $X$. However, we will require the following consequence of 1-formality due to Papadima--Suciu.  Suppose $\pi_1(X)=G$ is 1-formal and fits into an exact sequence \[1\rightarrow N\rightarrow G\rightarrow \Z\langle t \rangle \rightarrow 1,\] where $N$ is finitely generated. Let $X_\Z$ be the associated infinite cyclic cover of $X$.  We can identify $H_1(X_\Z;\C)$ with $H_1(N;\C)$ and regard it as an $\C[t,t^{-1}]$-module where $t$ acts by deck transformations.  Thus, $t:H_1(N;\C)\rightarrow H_1(N;\C)$ is a linear isomorphism, and we can consider its Jordan normal form.  Then we have

\begin{thm}\label{NoJordan} $($Papadima--Suciu \cite{PS10}$)$ Suppose $G$ as above is 1-formal.  If the action of the $t$ on $H_1(N;\C)$ has eigenvalue 1, the associated Jordan blocks are all of size 1.
\end{thm}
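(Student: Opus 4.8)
The plan is to argue by contradiction: a nontrivial Jordan block at the eigenvalue $1$ will produce a point of the resonance variety of $G$, which, under $1$-formality, forces a whole curve into the characteristic variety of $G$, contradicting finite generation of $N$. So suppose $t$ acts on $H_1(N;\C)$ with a Jordan block of size at least $2$ at the eigenvalue $1$. Pullback by the deck transformation gives the transpose action on $H^1(N;\C)=H_1(N;\C)^*$, so $t^*$ also has such a block, and there is $w\in H^1(N;\C)$ with $w':=(t^*-1)w$ nonzero and $(t^*-1)w'=0$. If $b_1(G)=1$ this situation cannot arise, since then $H^1(N;\C)^t$, being the image of $H^1(G;\C)$ under restriction, is zero; so assume $b_1(G)\ge 2$.

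First I would run the Wang exact sequence of the mapping torus fibration $X_\Z\to X\to S^1$ coming from $G\twoheadrightarrow\Z$. Restriction identifies $H^1(G;\C)/\C\xi\xrightarrow{\sim}H^1(N;\C)^t$, where $\xi\in H^1(G;\C)$ is pulled back from the circle. Pick $\beta\in H^1(G;\C)$ restricting to $w'$; since $w'\neq 0$ we have $\beta\notin\C\xi$. Multiplicativity of the Leray--Serre spectral sequence of this fibration (equivalently, a direct cochain computation) shows that cup product with $\xi$, viewed as a map $H^1(G;\C)\to H^2(G;\C)$ and composed with the isomorphism of its image onto the coinvariants $H^1(N;\C)_t$, is simply restriction followed by projection; hence $\xi\cup\beta$ corresponds to the class of $w'$ in $H^1(N;\C)_t$. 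But $w'\in\mathrm{im}(t^*-1)$, so this class is zero and $\xi\cup\beta=0$. As $\beta\notin\C\xi$, the first cohomology of the cochain complex $\big(H^*(G;\C),\,\xi\cup{-}\big)$ is nonzero, i.e.\ $\xi$ lies in the (first) resonance variety $\mathcal R^1(G)\subseteq H^1(G;\C)$.

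Next I would invoke $1$-formality through the Tangent Cone Theorem: for a $1$-formal group, the exponential map carries the germ at the origin of $\mathcal R^1(G)$ isomorphically onto the germ at the trivial character $\mathbf 1$ of the first characteristic variety $\mathcal V^1(G)\subseteq\mathrm{Hom}(H_1(G;\Z),\C^*)$ (this is the geometric form of $1$-formality used in \cite{PS10}, resting ultimately on the minimal-model technology behind Theorem~\ref{formal}). Since $\mathcal R^1(G)$ is a cone and contains $\xi$, it contains the line $\C\xi$; exponentiating, a neighborhood of $\mathbf 1$ in the one-parameter subtorus $T_\xi:=\exp(\C\xi)$ lies in $\mathcal V^1(G)$. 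As $T_\xi$ is an irreducible curve and $\mathcal V^1(G)$ is Zariski closed, $T_\xi\subseteq\mathcal V^1(G)$.

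Finally I would contradict finite generation of $N$. The deck group $\langle t\rangle$ makes $H_1(N;\C)=H_1(X_\Z;\C)$ a module over $\C[t,t^{-1}]$, and a standard computation identifies $\mathcal V^1(G)\cap T_\xi$, away from $\mathbf 1$, with the zero locus of the order of this module, namely the characteristic polynomial of $t$ acting on $H_1(N;\C)$. Since $N$ is finitely generated, $H_1(N;\C)$ is finite dimensional over $\C$, hence torsion over $\C[t,t^{-1}]$ with nonzero characteristic polynomial, so $\mathcal V^1(G)\cap T_\xi$ is finite---contradicting $T_\xi\subseteq\mathcal V^1(G)$. Hence no Jordan block of size $\ge 2$ occurs at the eigenvalue $1$. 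The main obstacle is the Tangent Cone Theorem: this is where $1$-formality is genuinely used, and its proof is considerably deeper than the rest of the argument; one must also take some care identifying $\xi\cup{-}$ with passage to the monodromy coinvariants in the Wang step, and matching the characteristic variety along $T_\xi$ with the Alexander polynomial of the pair $(G,\xi)$.
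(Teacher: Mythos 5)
The paper gives no proof of Theorem~\ref{NoJordan}: it is imported as a black box from Papadima--Suciu \cite{PS10}, so there is no internal argument to measure yours against, and I can only judge it on its merits and against the cited source. Your proof is correct, and it is essentially the jump-loci argument of \cite{PS10} and of Dimca--Papadima--Suciu. Each step checks out: a Jordan block of size at least $2$ at eigenvalue $1$ yields a nonzero element $w'\in\ker(t^*-1)\cap\mathrm{im}(t^*-1)$ in $H^1(N;\C)$; the Wang sequence and the multiplicativity of the Serre spectral sequence of $K(N,1)\to K(G,1)\to S^1$ do identify $\xi\cup-$ on $H^1(G;\C)$ with restriction followed by projection to the coinvariants $H^1(N;\C)_t$, so you obtain $\beta\notin\C\xi$ with $\xi\cup\beta=0$ and hence $\xi\in\mathcal{R}^1(G)$; the tangent cone theorem for $1$-formal groups then forces the algebraic subtorus $\exp(\C\xi)$ into $\mathcal{V}^1(G)$, whereas finite generation of $N$ makes $H_1(N;\C)$ finite dimensional and $\mathcal{V}^1(G)\cap\exp(\C\xi)$ finite away from the trivial character. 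Your disposal of the degenerate case $b_1(G)=1$, where eigenvalue $1$ cannot occur at all, is also right. The one caveat to state plainly is that all of the $1$-formality input is concentrated in the Dimca--Papadima--Suciu tangent cone theorem, which you invoke wholesale; since the statement being proved is itself an attributed external result, that is acceptable, but it means your argument is a faithful reconstruction of the published proof rather than an independent or more elementary route.
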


As an example, the 3-dimensional Heisenberg group $\mathcal{H}_3(\Z)$ can be thought of as the fundamental group of a torus bundle fibering over the circle.  More precisely, take $T^2\times [0,1]$ and identify $T^2\times\{0\}$ and $T^2\times \{1\}$ by the diffeomorphism represented by the matrix \[A=
\left(\begin{array}{cc} 1 &1\\
0&1
\end{array}\right).\] This gives an splitting $\mathcal{H}_3(\Z)\cong \Z^2\rtimes \Z$, where the $\Z$-factor acts by $A$ on $\Z^2=H_1(T^2)$.  Since $A$ is a Jordan block of size 2, $\mathcal{H}_3(\Z)$ is not 1-formal.  Note that the fact that the cokernel is $\Z$ is crucial.  In particular, all the higher-dimensional Heisenberg groups $\mathcal{H}_{2n+1}(\Z)$ for $n\geq 2$ are 1-formal by Carlson--Toledo \cite{CaTo95} but like $\mathcal{H}_3(\Z)$, they factor as semi-direct products $\mathcal{H}_{2n+1}(\Z)=\Z^{n+1}\rtimes \Z^n$ where each generator of $\Z^n$ acts by a unipotent matrix with a single Jordan block of size 2.

\section{Abelian subgroups of $\Mod(\Sigma_g)$ and covers}
First we recall some basic facts about mapping class groups.   Let $\Sigma=\Sigma^b_{g,n}$ be the orientable surface of genus $g$, with $n$ marked points and $b$ boundary components. The \emph{mapping class group of $\Sigma$}, denoted $\Mod(\Sigma)$, is the group of orientation preserving diffeomorphisms of $\Sigma$, up to homotopy.  Diffeomorphisms and homotopies are required to fix $\partial \Sigma$ pointwise, and to fix marked points setwise. 
The Nielsen-Thurston classification of surface diffeomorphisms states that every mapping class $\phi$ falls into one of three categories: finite order, reducible, and pseudo-Anosov (see \cite{FaMa12}, Theorem 13.2). If $\phi$ has finite order, then $\phi$ has a representative which is a finite order diffeomorphism of $\Sigma$.  If $\phi$ is \emph{reducible}, then $\phi$ has a representative which fixes some 1-submanifold setwise. If $\phi$ is \emph{pseudo-Anosov}, then $\phi$ does not preserve any conjugacy class in $S_g$.  Finite order and reducible are not mutually exclusive, but both are disjoint from pseudo-Anosov. A subgroup $H\leq\Mod(\Sigma)$ is called reducible if every element $H$ fixes the same 1-submanifold of $\Sigma$.   

Now let $\Sigma=\Sigma_g$ be the closed, orientable surface of genus $g$.  The two results in the previous section are the main tools used in the proof of the main theorem, along with some understanding of abelian subgroups of $\Mod(\Sigma)$.  Suppose \[1\rightarrow S_g\rightarrow E\rightarrow Q\rightarrow 1\] be any extension of $Q$ by $S_g$.  It may not be split, but taking a set theoretic section $s:Q\rightarrow E$, the conjugation action of $E$ on $S_g$ induces a well-defined homomorphism $\rho:Q\rightarrow \Out(S_g)=\Mod^{\pm}(\Sigma)$, where the latter equality follows from the Dehn--Nielsen--Baer theorem.  Passing to a subgroup of index $2$ if necessary, we can thus assume that the image of $\rho$ lies in $\Mod(\Sigma)$.  In our case, $Q=\Z^r$ is abelian, so we just need to analyze free abelian subgroups of $\Mod(\Sigma)$. The following proposition follows immediately from work of Birman--Lubotzky--McCarthy \cite{BLM83}, but for reference we sketch the proof here.


\begin{proposition}\label{Split}Let $A\leq \Mod(\Sigma)$ be a free abelian subgroup.  Then \begin{enumerate} 
\item $A$ has rank at most $3g-3$.  
\item Let $E$ be the extension corresponding to $A$.  There is a finite index subgroup $E'\leq E$ which is split, i.e. can be written $E'=S_g\rtimes \Z^r$.   
\end{enumerate}
\end{proposition}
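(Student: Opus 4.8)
The plan is to pass to a finite-index subgroup of $A$ on which the Birman--Lubotzky--McCarthy normal form for abelian subgroups is available, and then read off both statements. Choose a finite-index subgroup $A'\leq A$ which is \emph{pure}: there is a multicurve $\sigma=\{c_1,\dots,c_k\}$ (a canonical reduction system for $A'$) such that every element of $A'$ fixes each $c_i$ and each component $P_1,\dots,P_m$ of $\Sigma\smallsetminus\sigma$, and restricts on each $P_j$ either to the identity or to a pseudo-Anosov map. Combining the twisting homomorphism $\Mod(\Sigma)_\sigma\to\Z^k$ about the curves of $\sigma$ (where $\Mod(\Sigma)_\sigma$ denotes the stabilizer of $\sigma$) with the restriction homomorphisms to the pieces --- shrinking $A'$ once more so that its image in each $\Mod(P_j)$ is trivial or infinite cyclic, generated by a single pseudo-Anosov $\psi_j$, since commuting pseudo-Anosovs share a power --- one obtains a short exact sequence of free abelian groups
\[
1\longrightarrow T\longrightarrow A'\longrightarrow\prod_{j\in J}\langle\psi_j\rangle\longrightarrow 1,
\]
where $T\leq\Z^k$ records the twisting vectors realized in $A'$ and $J$ indexes the pieces on which $A'$ acts by a pseudo-Anosov. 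As the quotient is free abelian this splits, so $A'\cong T\times\prod_{j\in J}\langle\psi_j\rangle$.

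Statement (1) is then an Euler-characteristic count: $\mathrm{rank}(A)=\mathrm{rank}(A')\leq k+|J|$. Writing $-\chi(P_j)=2h_j-2+b_j$ where $h_j,b_j$ are the genus and number of boundary curves of $P_j$, we have $\sum_j\bigl(-\chi(P_j)\bigr)=2g-2$ and $\sum_j b_j=2k$. No $P_j$ is a disk, an annulus, or closed, and no $P_j$ with $j\in J$ is a thrice-punctured sphere (which carries no pseudo-Anosov); checking the few remaining cases gives $\tfrac12 b_j+[\,j\in J\,]\leq\tfrac32\bigl(-\chi(P_j)\bigr)$ for every $j$. Summing over $j$ yields $k+|J|\leq\tfrac32(2g-2)=3g-3$.

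For statement (2), recall that $S_g$ is centerless (here $g\geq2$), so an extension of a group $Q$ by $S_g$ is determined up to isomorphism by its outer action $Q\to\Out(S_g)$, and it splits precisely when this outer action lifts to a homomorphism $Q\to\Aut(S_g)$. It therefore suffices to lift the inclusion $A'\hookrightarrow\Mod(\Sigma)\subseteq\Out(S_g)$ through $\Aut(S_g)\to\Out(S_g)$, and for this it is enough to realize the generators of $A'$ by commuting orientation-preserving self-homeomorphisms of $\Sigma$ fixing a common point $x_0$: such homeomorphisms induce automorphisms of $\pi_1(\Sigma,x_0)\cong S_g$, giving a homomorphism $A'\to\Aut(S_g)$ that covers the inclusion into $\Out(S_g)$. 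The product decomposition provides these representatives. Choose disjoint closed annular neighborhoods $N_i\supseteq c_i$ and set $P_j'=\overline{\Sigma\smallsetminus\bigcup_i N_i}$; realize the twist about $c_i$ by a homeomorphism supported in $\mathrm{int}\,N_i$, and $\psi_j$ by one supported in $\mathrm{int}\,P_j'$. These have pairwise disjoint supports, and twisting numbers and exponents of the $\psi_j$ add, so the assignment extends to a homomorphism from $A'$ to the group of self-homeomorphisms of $\Sigma$ lifting $A'\hookrightarrow\Mod(\Sigma)$; and any point $x_0\in\partial N_1$ lies outside all of these supports, hence is fixed by the whole image. Therefore the preimage $E'$ of $A'$ in $E$ is the split extension, $E'\cong S_g\rtimes A'\cong S_g\rtimes\Z^r$ with $r=\mathrm{rank}(A')\leq3g-3$, and $[E:E']=[A:A']<\infty$.

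The substantive input is the normal form of the first paragraph --- that an abelian subgroup of $\Mod(\Sigma)$ becomes pure after passing to finite index, that its restriction to each complementary piece is virtually cyclic generated by a pseudo-Anosov, and that the group then decomposes as the displayed direct product --- which is exactly where \cite{BLM83} is used and the only place a further finite-index passage is needed. Granting this, part (1) is bookkeeping and part (2) is the splitting criterion for centerless-kernel extensions together with the explicit disjointly supported representatives.
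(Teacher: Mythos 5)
Your argument is correct and follows essentially the same route as the paper: pass to a pure finite-index subgroup via the Birman--Lubotzky--McCarthy canonical reduction system, get the rank bound from the Euler-characteristic count, and split the extension by realizing the (reducible) abelian group by disjointly supported homeomorphisms fixing a common basepoint, so that the outer action lifts to $\Aut(S_g)$; you simply supply the details the paper delegates to \cite{BLM83}. The only loose end is the irreducible case $\sigma=\emptyset$, where your basepoint $x_0\in\partial N_1$ does not exist --- but there $A'$ is cyclic and any outer action of $\Z$ lifts to $\Aut(S_g)$, so the extension splits trivially.
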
 
\begin{proof}   If $\Z^r\leq \Mod(\Sigma)$ is not reducible, then the image of $\rho$ lies in the centralizer of a pseudo-Anosov diffeomorphism, which is virtually cyclic.  It follows that a finite index subgroup of $E$ splits as a product $(S_g\rtimes \Z)\times \Z^{r-1}$ where the $\Z$-factor is represented by a pseudo-Anosov diffeomorphism of $\Sigma$.  If $\Z^r$ is reducible, then after passing to a finite index subgroup, $\Z^r$ fixes a collection of disjoint simple closed curves $\mathcal{C}$, and each complementary region of $\Sigma\setminus \mathcal{C}$, up to homotopy.  This gives the bound $r\leq 3g-3$, proving (1). In the reducible case, it is easy to see that $\Z^r$ can actually be realized as a group of diffeomorphisms of the surface which act as the identity on a neighborhood of some chosen base-point $*$.  Identifying $\Mod(\Sigma,*)$ with $\Aut(S_g)$, we see that $E$ splits after passing to a finite index subgroup, proving (2). 
\end{proof}

From the previous proposition, without loss of generality we may assume that $E=S_g\rtimes \Z^r$.  Moreover, from the proof we have that either $\rho(\Z^r)$ lies in the centralizer of pseudo-Anosov diffeomorphism, or $\rho(\Z^r)$ is realized as group of surface diffeomorphisms which preserve a small tubular neighborhood of an embedded 1-submanifold $\mathcal{C}\subset\Sigma$.  Call this neighborhood $N(\mathcal{C})$.  Then every element of $\Z^r$ preserves each connected component of $N(\mathcal{C})$ and of $\Sigma\setminus N(\mathcal{C})$.  We assume that only one curve of $\mathcal{C}$ lies in each connected component of $N(\mathcal{C})$. The submanifold $\mathcal{C}$ can be chosen maximally so that after passing to a finite index subgroup, on each component of $\Sigma\setminus N(\mathcal{C})$, every element of $\Z^r$ acts either as the identity or as a pseudo-Anosov diffeomorphism (see \cite{FaMa12}, Corollary 13.3). 

We now describe a way to pass to finite index subgroups of $E$.  Let $\mathcal{C}$ denote the set of simple closed curves fixed by $\Z^r$.  Every component of $\Sigma\setminus \mathcal{C}$  has Euler characteristic negative.  We have the following proposition about passing to covers of $\Sigma$.

\begin{proposition} \label{lift}Let $E$ be as above.  For any cover $\widetilde{\Sigma}$ of $\Sigma$, there exists a finite index subgroup $\widetilde{E}\leq E$ with $\widetilde{E}\cong \pi_1(\widetilde{\Sigma})\rtimes \Z^r$.  
\end{proposition}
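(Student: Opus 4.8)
The plan is to keep the cover $\widetilde\Sigma$ fixed and instead replace the $\Z^r$ on top by a finite-index subgroup which preserves the subgroup $\pi_1(\widetilde\Sigma)\le S_g$. By Proposition~\ref{Split} we may assume $E=S_g\rtimes_\phi\Z^r$ for some $\phi\colon\Z^r\to\Aut(S_g)$; thus $S_g\trianglelefteq E$ with a fixed complement $\Z^r\le E$ whose conjugation action on $S_g$ is given by the \emph{honest} automorphisms $\phi$. Let $\widetilde\Sigma\to\Sigma$ be a finite cover of degree $d$ and set $H=\pi_1(\widetilde\Sigma)\le S_g$, an index-$d$ subgroup.

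First I would use that $S_g$ has only finitely many subgroups of index $d$; call this finite set $\mathcal S_d$. The group $\Aut(S_g)$, and hence $\Z^r$ via $\phi$, permutes $\mathcal S_d$, so the induced homomorphism $\Z^r\to\mathrm{Sym}(\mathcal S_d)$ has finite-index kernel, and therefore the stabilizer $\Lambda:=\{\lambda\in\Z^r:\phi_\lambda(H)=H\}$ of the point $H\in\mathcal S_d$ is a finite-index subgroup of $\Z^r$; in particular $\Lambda\cong\Z^r$. For every $\lambda\in\Lambda$ the automorphism $\phi_\lambda$ restricts to an automorphism of $H$, so we get a restricted action $\phi|_\Lambda\colon\Lambda\to\Aut(H)$.

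Next I would form $\widetilde E:=H\rtimes_{\phi|_\Lambda}\Lambda$ and verify it does the job, viewing everything inside $E$. The subgroup $\Lambda\le\Z^r\le E$ normalizes $H\le S_g\le E$, since conjugation by $\Lambda$ on $S_g$ is exactly $\phi|_\Lambda$, which preserves $H$; moreover $H\cap\Lambda\subseteq S_g\cap\Z^r=1$, so $\langle H,\Lambda\rangle$ is the internal semidirect product $H\rtimes\Lambda$ (closure under the multiplication $(h_1,\lambda_1)(h_2,\lambda_2)=(h_1\phi_{\lambda_1}(h_2),\lambda_1\lambda_2)$ is immediate because $\phi_{\lambda_1}(h_2)\in H$). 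Its index in $E$ is $[S_g:H]\cdot[\Z^r:\Lambda]<\infty$, and since $H=\pi_1(\widetilde\Sigma)$ and $\Lambda\cong\Z^r$ we obtain $\widetilde E\cong\pi_1(\widetilde\Sigma)\rtimes\Z^r$, as required.

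The only point requiring care is that $\widetilde\Sigma$ need not be invariant under the ambient $\Z^r$-action; the device above sidesteps this by passing to the stabilizer $\Lambda$ rather than altering the cover. (One could instead pass to a characteristic finite-index subgroup of $S_g$, but that would only yield $\pi_1(\Sigma')\rtimes\Z^r$ for some further cover $\Sigma'\to\widetilde\Sigma$, not $\pi_1(\widetilde\Sigma)$ on the nose.) It is worth recording that the argument genuinely uses that $E$ is split — so that $\Z^r$ acts by automorphisms, not merely outer automorphisms — which is exactly what Proposition~\ref{Split} provides, and that a finite-index subgroup of $\Z^r$ is again free abelian of rank $r$.
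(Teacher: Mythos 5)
Your proof is correct, but it takes a genuinely different route from the paper. The paper argues concretely: it splits the generators of $\Z^r$ into Dehn twists along curves of $\mathcal{C}$ and pseudo-Anosov pieces on complementary subsurfaces, and in each case exhibits an explicit power that lifts to the cover (twists lift to explicit powers of twists about the preimage curves; a sufficiently high power of a pseudo-Anosov acts trivially on all subgroups of $\pi_1(W)$ of bounded index). You instead use the soft fact that the finitely generated group $S_g$ has only finitely many subgroups of index $d$, so that $\Z^r$ permutes them and the stabilizer $\Lambda$ of $H=\pi_1(\widetilde\Sigma)$ has finite index, hence is again free abelian of rank $r$; the internal semidirect product $H\rtimes\Lambda$ then has index $[S_g:H]\cdot[\Z^r:\Lambda]$ in $E$. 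This is shorter, needs no Nielsen--Thurston input, and works verbatim for any finitely generated normal subgroup in place of $S_g$. What the paper's hands-on version buys is finer control that is exploited later: the lifted subgroup is generated by explicit powers $e_i^D$ of the original basis, and one sees exactly how each lift acts (as a power of a Dehn twist about each preimage curve, or as a pseudo-Anosov on each preimage subsurface, cf.\ Remark \ref{StillPseudo} and Proposition \ref{PantsLift}). Your argument recovers the ``powers of generators'' refinement with one extra line --- $\Lambda$ contains $D\Z^r$ where $D$ is the exponent of the finite quotient $\Z^r/\ker(\Z^r\to\mathrm{Sym}(\mathcal S_d))$ --- but the type-preservation statements would still need the geometric discussion.
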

\begin{proof} Fix a cover $p:\widetilde{\Sigma}\rightarrow \Sigma$ of degree $d$.   Each curve $C\in \mathcal{C}$ has pre-image some union of curves $\tilde{C}_1,\cdots, \tilde{C}_k$, and each subsurface $W\subset \Sigma$ has pre-image some union $\tilde{W}_1,\ldots, \tilde{W}_l$ of covers of $W$.  Let $e_1,\ldots, e_r$ be generators for $\Z^r$.  We claim that there exists $D>0$ such that the subgroup generated by $D$th powers $\langle e_1^D,\ldots e_r^D\rangle\leq \Z^r$ lifts to $\widetilde{\Sigma}$.  We may assume that if $e_i$ and $e_j$ act as a Dehn twist along some curve $C\in\mathcal{C}$, then $e_i$ and $e_j$ both act as powers of the same fixed diffeomorphism. 

To prove the claim, we lift each $e_i$ separately. There are two cases to consider.  \\

\noindent
\textbf{Case 1:} Suppose $e_i$ acts as a Dehn twist along some curve $C$.  The pre-image $p^{-1}(C)=\tilde{C}_1\coprod\cdots\coprod \tilde{C}_k$ is a disjoint union of simple closed curves equipped with covering maps $p_j=p|_{\tilde{C}_j}:\tilde{C}_j\rightarrow C$.  If the $p_j$ has degree $q_j$ then $\sum_jq_j=d$, and $q_j|d$.  By the observation above, $e_i$ is a $k$th power of some fixed Dehn twist diffeomorphism $T_C$ supported on a neighborhood $N$ of $C$.  Then $e_i^d$ lifts to a diffeomorphism which acts as $T_{\tilde{C}_j}^{(k\cdot d/q_j)}$ on a neighborhood $\tilde{N}_j$ of $\tilde{C}_j$. \\

\noindent
\textbf{Case 2:} Suppose some $e_i$ acts as a pseudo-Anosov diffeomorphism on some complementary subsurface $W$.  Then for any other $e_j, j\neq i$, $e_j$ either acts as the identity on $W$ or $e_i$ and $e_j$ are both powers of a common pseudo-Anosov $f_W$, which restricts to the identity on $\partial W$. We therefore lift $f_W$.  As in the case of a curve, $p^{-1}(W)=\tilde{W}_1\coprod\cdots\coprod \tilde{W}_l$ is a disjoint union of covers of $W$, with $W_j$ have degree $t_j$, say.  The image of $\pi_1(\tilde{W}_j)$ in $\pi_1(W)$ is a subgroup of index $t_j$.  By choosing a sufficiently high power $D_i$ of $f_W$ we may insure that $f_W^{D_i}$ acts as the identity permutation on all subgroups of $\pi_1(W)$ of index at most $\max\{t_j\}$. Then $f_W^{D_i}$, and hence $e_i^{D_i}$, lifts to each $\tilde{W}_j$. \\

Now set $D=\max\{D_1,\ldots, D_r,d\}$.  It follows that the subgroup $\langle e_1^D,\ldots, e_r^D\rangle$ preserves the subgroup $\pi_1(\widetilde{\Sigma})\leq \pi_1(\Sigma)$.  Thus, there is a subgroup $\widetilde{E}\leq E$ of index at most $d\cdot D^r$ such that $\widetilde{E}\cong \pi_1(\widetilde{\Sigma})\rtimes \Z^r$, as desired.
\end{proof}

\begin{rmk}\label{StillPseudo}  Since we considered a cover in Proposition \ref{lift}, the lifts are type preserving in the following sense.  If some element $v\in\Z^r\leq E$ acts as a pseudo-Anosov on some subsurface $W\subset \Sigma$, and $v$ lifts $\widetilde{\Sigma}$, then $v$ acts as a pseudo-Anosov on every component of the pre-image of $W$ in $\widetilde{\Sigma}$.  This follows from the fact that pseudo-Anosov diffeomorphisms are characterized by the property that they preserve a pair of transverse measured foliations on the surface, so the same is true in any lift to a cover.
\end{rmk}

We will also need a recent result of Hadari about the action of a mapping class on the homology of a cover.  A surface $\Sigma$ is said to have \emph{finite type} if it has finitely many marked points and boundary components.

\begin{thm}\label{InfiniteOrder} $($Hadari \cite{Had15}$)$ Let $\Sigma$ be an oriented surface of finite type with free fundamental group.  If $\phi\in \Mod(\Sigma)$ is a mapping class with infinite order then there exists a finite cover $\widetilde{\Sigma}\rightarrow \Sigma$ and a lift $\widetilde{\phi}$ of $\phi$ such that the action of $\widetilde{\phi}$ on $H_1(\widetilde{\Sigma})$ has infinite order.  
\end{thm}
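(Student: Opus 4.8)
The plan is to reduce, via the Nielsen--Thurston classification, to the case where $\phi$ is pseudo-Anosov, and then to detect the dilatation $\lambda>1$ --- or a power of it --- as an eigenvalue of modulus $\neq 1$ for the action of a lift on the homology of a suitable finite cover.

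\textbf{Reduction.} Since $\phi$ has infinite order, it is either pseudo-Anosov or reducible. In the reducible case, fix a reduction system $\mathcal C$ and replace $\phi$ by a power fixing each curve of $\mathcal C$ and each component of $\Sigma\setminus\mathcal C$; then either $\phi$ restricts to an infinite-order (hence pseudo-Anosov, since a component of the complement of the full reduction system admits no further essential reduction) class on some component $W$, or $\phi$ is a nontrivial multitwist along $\mathcal C$. In the first subcase one solves the problem for $W$ and transfers the conclusion to $\Sigma$ by extending a cover of $W$ to a cover of $\Sigma$ (adjoining trivially covered copies of the remaining pieces and passing to a further power, exactly as in Proposition \ref{lift}), comparing the homology of the covers via transfer. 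In the multitwist subcase, if some twisting curve is non-separating then $\phi$ already acts with infinite order on $H_1(\Sigma)$; otherwise, using residual finiteness of $\pi_1(\Sigma)$, one passes to a finite cover in which a lift of a twisting curve is non-separating, so that the induced twist is a nontrivial transvection on $H_1$. Finally, a lift of a power $\phi^k$ with the required property is upgraded to one for $\phi$ by replacing the cover with the intersection of its finitely many $\phi$-translates --- a $\phi$-invariant finite cover on which $\phi$ lifts --- since a lift of $\phi$ whose $k$-th power has infinite order on homology has infinite order on homology.

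\textbf{The pseudo-Anosov case.} Let $\phi$ be pseudo-Anosov with unstable foliation $(\mathcal F,\mu)$, whose interior singularities are all at least $3$-pronged. If $\mathcal F$ is transversely orientable, its transverse measure is a closed $1$-form representing a nonzero class $\omega\in H^1(\Sigma;\R)$ with $\widehat\phi^{\,*}\omega=\lambda^{\pm1}\omega$ for a suitable power $\widehat\phi$ of $\phi$; since $|\lambda^{\pm1}|\neq1$ we are done on $\Sigma$ itself. In general the transverse-orientation monodromy defines a homomorphism $\pi_1(\Sigma\setminus\mathrm{Sing})\to\Z/2$, and after passing to the associated double cover the lifted foliation becomes transversely orientable, producing as above a lift of a power of $\phi$ with an eigenvalue $\lambda^{\pm1}$ on $H^1$ of the cover, hence of infinite order on homology.

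\textbf{Main obstacle.} The subtlety is that this orientation double cover is genuinely \emph{branched} over the odd-pronged (interior) singularities, so it is a branched, not an honest, cover of $\Sigma$; transferring the homological conclusion back to an honest finite cover of $\Sigma$ is the crux, and it is not clear that the branched construction can be made to descend. A more robust route, and the one taken in \cite{Had15}, bypasses foliations: the action of $\phi$ on the $\SL_2(\C)$-character variety of $\pi_1(\Sigma)$ has positive algebraic entropy whenever $\phi$ is not virtually a multitwist, and this entropy is governed by the homological action of $\phi$ on the tower of finite covers, so it would be forced to vanish if every lift to every finite cover acted with finite order on homology --- a contradiction. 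Making this last implication quantitative, i.e. bounding the growth of degrees on character varieties in terms of homology of covers, is where the real work lies.
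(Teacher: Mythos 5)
This statement is quoted from Hadari's paper \cite{Had15} and is used as a black box; the paper under review contains no proof of it, so your attempt has to stand on its own. It does not: you have correctly located the hard point, but you have not resolved it, and you say so yourself. The reduction to the pseudo-Anosov case and the multitwist analysis are plausible in outline (though the step upgrading a cover for $\phi^k$ to a $\phi$-invariant cover needs more care than one sentence: lifts of $\phi$ form a coset of the finite deck group, so ``some lift acts with infinite order'' is not automatically inherited by a further cover without an explicit transfer/equivariance argument on $H_1(\,\cdot\,;\Q)$). The genuine gap is the pseudo-Anosov case. Your foliation argument only closes when the invariant foliation is transversely orientable on an honest finite cover; the orientation double cover is branched at the odd-pronged singularities, and there is no known procedure for replacing it by an unbranched finite cover of $\Sigma$. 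This is not a technicality one can expect to smooth over: whether the dilatation $\lambda$ (or a power of it) is ever realized as a homological eigenvalue on some finite cover is essentially McMullen's question on homological versus geometric dilatation, which is strictly stronger than Hadari's theorem and is not known in general. So the strategy of ``detecting $\lambda$ on $H^1$ of a cover'' is aiming at a harder statement than the one you need, and fails at exactly the point you flag.

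Your fallback --- positive algebraic entropy of the action on the $\SL_2(\C)$-character variety, forced to vanish if all homological actions on covers were finite order --- is a gesture at Hadari's actual circle of ideas, but you explicitly leave the key implication (bounding degree growth on the character variety by homological growth over the tower of finite covers) unproved, and that implication is the entire content of the theorem. As written, the proposal is a correct identification of why the theorem is difficult rather than a proof of it. If you want to use this result in the context of the paper, cite \cite{Had15} as the authors do; if you want to prove it, the pseudo-Anosov case must be attacked by a mechanism that does not pass through orientability of the invariant foliation.
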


 In fact, Hadari shows that $\widetilde{\Sigma}$ can be taken to be a solvable cover, but all that will be important for us is that $\widetilde{\phi}$ has infinite order.

\begin{figure}[!ht]
\centering
\vspace*{-.25in}
\hspace*{.25in}
\includegraphics[width=6in]{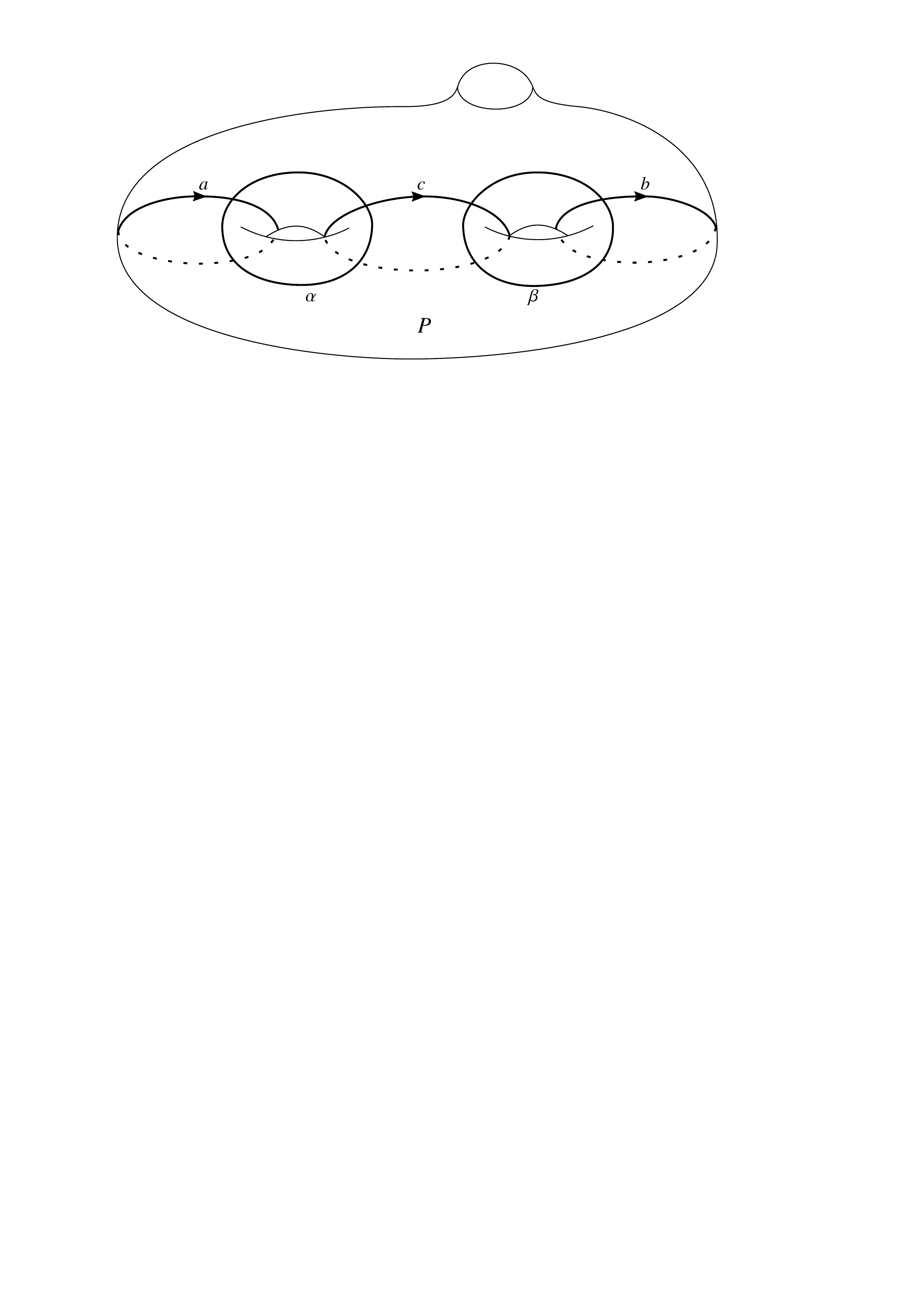}
\vspace{-6in}
\caption{A pair of pants $P\subset \Sigma$ with boundary curves $a$, $b$ and $c$.  With the indicated orientations $[a]+[b]+[c]=0\in H_1(\Sigma)$. The curves $\alpha$ and $\beta$ are Poincar\'{e} dual to $a$ and $b$, respectively.}
\label{Fig:Pants}

\end{figure}

Finally, we need a result about Dehn twists along pants curves and passing to covers.  Let $\Sigma$ be a closed, oriented surface and $P\subset \Sigma$ a pair of pants, with boundary curves $a,b,c$ in which every every pants curve is non-separating.  See figure \ref{Fig:Pants} for a schematic.  Then we have $[a]+[b]+[c]=0$ in $H_1(\Sigma;\C)$.  Denote by $T_a$, $T_b$ and $T_c$ are the right-handed Dehn twists about $a$, $b$, and $c$, respectively.  The action of $T_a$, $T_b$ and $T_c$ on $H_1(\Sigma;\C)$, makes it into an $\C[t_a^{\pm},t_b^{\pm},t_c^\pm]$-module.  Observe that the action of $t_c$ on the quotient module $H_1(\Sigma;\C)/\langle t_a,t_b\rangle$ is trivial, and similarly for any pair of $T_a,$ $T_b,$ and $T_c$.  The next proposition shows that after passing to a cover, this need not be the case.

\begin{proposition} \label{PantsLift}Let $P\subset \Sigma$ be a pair of pants as above, and suppose $T_a^{n_a}$, $T_b^{n_b}$ and $T_c^{n_c}$ are powers of Dehn twists about $a$, $b$, and $c$, respectively, for some $n_a,n_b,n_c\in \Z$. Then there exists a cover $\widetilde{\Sigma}\rightarrow\Sigma$ and, after passing to powers, lifts $\widetilde{T}_a$, $\widetilde{T}_b,$ and $\widetilde{T}_c$ such that $\widetilde{T}_c$ acts as a non-trivial Dehn twist on on $H_1(\widetilde{\Sigma};\C)/\langle \widetilde{t}_a,\widetilde{t}_b\rangle$.
\end{proposition}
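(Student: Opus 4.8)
The plan is to take $\widetilde{\Sigma}$ to be a suitable double cover of $\Sigma$. Because $a,b,c$ are non-separating their classes in $H_1(\Sigma;\Z/2)$ are all nonzero, and $[a]+[b]+[c]=0$ gives $[c]\equiv[a]+[b]$; in particular $[a]+[b]\neq 0$, so $[a]$ and $[b]$ span a two-dimensional subspace of $H_1(\Sigma;\Z/2)$, and there is a homomorphism $\phi\colon\pi_1(\Sigma)\to\Z/2$ with $\phi([a])=\phi([b])=1$, whence $\phi([c])=0$. Let $p\colon\widetilde{\Sigma}\to\Sigma$ be the associated double cover. The whole point of this choice is that $\phi$ restricts nontrivially to $\pi_1(P)$ (it sends the boundary loop around $a$ to $1$), so $p$ is a nontrivial cover \emph{over the pair of pants itself}; by contrast a double cover chosen to record the mod-$2$ intersection number with a single fixed curve restricts trivially over $P$ --- since $a,b,c$ are pairwise disjoint --- and accomplishes nothing. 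Getting this right is the crux.

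Next I would read off the preimages. The composite $\pi_1(P)\to\pi_1(\Sigma)\to\Z/2$ is onto, so $\widehat{P}:=p^{-1}(P)$ is a connected double cover of $P$; likewise $p^{-1}(\Sigma')$ is connected for $\Sigma'=\overline{\Sigma\setminus P}$. Since $\phi([a])=\phi([b])=1$ and $\phi([c])=0$, the preimages $\widetilde{a}=p^{-1}(a)$ and $\widetilde{b}=p^{-1}(b)$ are single circles double-covering $a$ and $b$, while $\widetilde{c}=p^{-1}(c)=\widetilde{c}_1\sqcup\widetilde{c}_2$ is two circles each mapped homeomorphically to $c$. As $\chi(\widehat{P})=2\chi(P)=-2$ and $\widehat{P}$ has the four boundary circles $\widetilde{a},\widetilde{b},\widetilde{c}_1,\widetilde{c}_2$, it is a four-holed sphere, and $\widetilde{\Sigma}$ is obtained by gluing $\widehat{P}$ to $p^{-1}(\Sigma')$ along these four curves.

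The key step is to separate $\widetilde{c}_1$ from $\widetilde{a}$ and $\widetilde{b}$ homologically. In the four-holed sphere $\widehat{P}$ choose an embedded arc $\mu$ from $\widetilde{c}_1$ to $\widetilde{c}_2$, and in the connected surface $p^{-1}(\Sigma')$ --- which also has $\widetilde{c}_1,\widetilde{c}_2$ among its boundary circles --- choose an arc $\mu'$ from $\widetilde{c}_2$ back to $\widetilde{c}_1$; the interiors of $\mu$ and $\mu'$ lie in the interiors of these subsurfaces and hence avoid $\widetilde{a}\cup\widetilde{b}$. Then $\gamma:=\mu\ast\mu'$ is a loop in $\widetilde{\Sigma}$ disjoint from $\widetilde{a}\cup\widetilde{b}$ that meets each of $\widetilde{c}_1,\widetilde{c}_2$ transversely exactly once. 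From $[\widetilde{a}]+[\widetilde{b}]+[\widetilde{c}_1]+[\widetilde{c}_2]=0$ and $\langle\gamma,\widetilde{a}\rangle=\langle\gamma,\widetilde{b}\rangle=0$ the two crossings have opposite sign, so $\langle\gamma,\widetilde{c}_1\rangle=-\langle\gamma,\widetilde{c}_2\rangle=\pm1$. By bilinearity every element of $\C[\widetilde{a}]+\C[\widetilde{b}]$ pairs to $0$ with $[\gamma]$, whereas $\langle\gamma,[\widetilde{c}_1]-[\widetilde{c}_2]\rangle=\pm2\neq0$; hence $[\widetilde{c}_1]-[\widetilde{c}_2]\notin\C[\widetilde{a}]+\C[\widetilde{b}]$.

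Finally, as in the proof of Proposition \ref{lift}, after replacing $T_a^{n_a}$ and $T_b^{n_b}$ by further powers they lift to powers of the Dehn twists $T_{\widetilde{a}}$ and $T_{\widetilde{b}}$, and (since $c$ already lifts) $T_c^{n_c}$ lifts to $\widetilde{T}_c=T_{\widetilde{c}_1}^{n_c}T_{\widetilde{c}_2}^{n_c}$. Because $\widetilde{a},\widetilde{b},\widetilde{c}_1,\widetilde{c}_2$ are pairwise disjoint, the classes $[\widetilde{a}],[\widetilde{b}]$ are fixed by $\widetilde{t}_a,\widetilde{t}_b$ and $\widetilde{t}_c$, so the submodule $\langle\widetilde{t}_a,\widetilde{t}_b\rangle$ equals $\C[\widetilde{a}]+\C[\widetilde{b}]$. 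Then $(\widetilde{t}_c-1)[\gamma]=\pm n_c([\widetilde{c}_1]-[\widetilde{c}_2])$, which by the previous paragraph is nonzero in $H_1(\widetilde{\Sigma};\C)/\langle\widetilde{t}_a,\widetilde{t}_b\rangle$ provided $n_c\neq 0$; thus $\widetilde{T}_c$ acts nontrivially on this quotient, as required. I expect the main obstacle to be the choice of cover in the first step --- making $\phi$ nontrivial over $P$ while keeping $\widetilde{c}$ disconnected --- and the ensuing identification of $\widehat{P}$ as a four-holed sphere; once that is done, the homological computation and the lift of the twists are routine.
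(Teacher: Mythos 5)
Your argument is correct, but it takes a genuinely different route from the paper's, and is arguably more economical. The paper passes to the $16$-fold cover determined by reducing the symplectic subspace $\langle [a],[\alpha],[b],[\beta]\rangle$ modulo $2$ (so $c$, having nonzero image, lifts to curves that double-cover $c$), and then detects the surviving twist by exhibiting an explicit lift $\widetilde{d}_1$ of the separating boundary of a one-holed torus around $a\cup\alpha$ that meets a lift of $c$ once while missing $p^{-1}(a)\cup p^{-1}(b)$. You instead take the double cover killing $[c]$ but not $[a]$ or $[b]$ --- which exists precisely because all three pants curves are non-separating, so $[a]$ and $[b]$ are independent mod $2$ --- keep the preimage of $c$ disconnected, and detect the twist via the class $[\widetilde{c}_1]-[\widetilde{c}_2]$, which pairs nontrivially with your loop $\gamma$ while everything in $\C[\widetilde{a}]+\C[\widetilde{b}]$ pairs to zero with it. Both constructions deliver exactly what the main theorem needs (a class on which $\widetilde{t}_c-1$ is nonzero modulo the coinvariants of $\widetilde{t}_a,\widetilde{t}_b$, hence a Jordan block of size at least $2$ with eigenvalue $1$); your cover is smaller and your dual class is produced by a clean intersection-pairing computation rather than an explicit picture of the cover. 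Two cosmetic remarks: your assertion that $\langle\widetilde{t}_a,\widetilde{t}_b\rangle$ \emph{equals} $\C[\widetilde{a}]+\C[\widetilde{b}]$ would require knowing $\widetilde{a},\widetilde{b}$ are non-separating upstairs, but you only ever use the containment $\langle\widetilde{t}_a,\widetilde{t}_b\rangle\subseteq\C[\widetilde{a}]+\C[\widetilde{b}]$, which follows from the transvection formula and is all that is needed; and the hypothesis $n_c\neq 0$ that you flag is indeed implicit in the statement (otherwise it is vacuous) and holds in the application within the proof of the main theorem.
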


\begin{proof} We assume $a$, $b$ and $c$ are as in figure \ref{Fig:Pants}.  Find curves $\alpha$ and $\beta$ Poincar\'{e} dual to $a$ and $b$ respectively.  Then $\langle [a],[\alpha], [b], [\beta]\rangle$ generate a rank 4 symplectic subspace $V$ of $H_1(\Sigma)$.  Projection onto $V$ followed by reduction modulo 2 defines a homomorphisms $\varphi:\pi_1(\Sigma)\twoheadrightarrow (\Z/2\Z)^4$. Let $p:\widetilde{\Sigma}\rightarrow \Sigma$ be the 16-fold cover corresponding to $\varphi$. Each of $a, \alpha, b, \beta, c$ lift to 8 simple closed curves, where the restriction of $p$ to each curves is a 2-fold cover of the corresponding curve in $\Sigma$.

One can visualize the cover $\widetilde{\Sigma}$ as follows.  Let $F_1$ be the torus with one boundary component formed by taking a tubular neighborhood of $a\cup \alpha$, and let $F_2$ be the corresponding torus formed by $b\cup \beta$.  If $d_i$ is the boundary curve of $F_i$, then $c$ meets $d_i$ in two points.  Restricted to $F_i$, $p$ is the characteristic cover corresponding to the quotient $\Z^2\twoheadrightarrow(\Z/2\Z)^2$.  To see what this looks like, observe that the universal abelian cover of $F_i$ is just $\R^2$ with the interior of a small disk removed at each integer lattice point.  We then quotient by the subgroup $2\cdot\Z^2\leq \Z^2$ to get the characteristic cover.  Thus, the characteristic cover can be obtained by taking a square with 4 disks in the interior removed, and  identifying opposite sides by a translation. 

Denote the characteristic cover of $F_i$ by $\widetilde{F}_i$.  The preimage of $F_i$ is 4 disjoint copies of $\widetilde{F}_i$.  Each of the boundary curves in the copies of $\widetilde{F}_i$ is a different lift of $d_i$. Since $d_i$ is in the kernel of $\varphi$, there is exactly one lift for every element of $(\Z/2\Z)^4$. Observe that if $W=\Sigma\setminus (F_1\cup F_2)$, then $\varphi|_{\pi_1(W)}=1$, and $W$ has 16 disjoint lifts to $\widetilde{\Sigma}$, each isomorphic to $W$ itself.  To form the cover, $\widetilde{\Sigma}$, we join the lifts of $d_1$ to the corresponding lifts of $d_2$ along a lift of $W$. This will be $\widetilde{\Sigma}$.  

As $\varphi([c])\neq0$, the preimage of $p^{-1}(c)$ is a union of 4 nonseparating simple closed curves each of which double covers $c$ via $p$.  We claim that there is a nonseparating simple closed curve on $\widetilde{\Sigma}$ on which meets some lift of $c$ exactly once, and which doesn't meet $p^{-1}(a)$ or $p^{-1}(b)$. Let $\widetilde{c}$ be one of the components of the preimage of $c$.  There are exactly four preimages of $d_1$ which meet $c_1$ exactly once, and each is nonseparating. Choose one such curve and call it $\widetilde{d}_1$. Since $d_1$ doesn't meet a or $c$, we must have that $\widetilde{d}_1$ doesn't meet $p^{-1}(a)$ or $p^{-1}(b)$.

To prove the proposition, we now choose a symplectic basis $\mathcal{B}$ for $H_1(\widetilde{\Sigma})$ which extends $\{[\widetilde{c}],[\widetilde{d}_1]\}$. By Proposition \ref{lift}, we can lift the action of some power of $T_a^{n_a}$, $T_b^{n_b}$ and $T_c^{n_c}$ to $\widetilde{\Sigma}$ to $\widetilde{T}_a$, $\widetilde{T}_b$, and $\widetilde{T}_c$, respectively. With respect to $\mathcal{B}$, both $\widetilde{T}_a$ and $\widetilde{T}_b$ act trivially on the symplectic subspace $U$ spanned by $\{[\widetilde{c}],[\widetilde{d}_1]\}$. Therefore, $U\hookrightarrow H_1(\widetilde{\Sigma})$ and the action of $\widetilde{T}_c$ on the image of $U$ in $H_1(\widetilde{\Sigma})$ is an infinite order Dehn twist of $[\widetilde{d}_1]$ about $[\widetilde{c}]$.  \\
\end{proof}
\section{Proof of the main theorem}
In this section we prove the main theorem.  The strategy is to assume we have a K\"ahler extension $E$, and then construct finite index subgroups and quotients of $E$ with contradictory properties unless $E$ is virtually a product. By Proposition \ref{Split}, we assume that $E=S_g\rtimes \Z^r$, where the action of $\Z^r$ is given by a faithful representation into $\Mod(\Sigma_g)$.  Write $ \{e_1,\ldots, e_r\}$ for the standard basis of $\Z^r$, and $\mathcal{C}$ the disjoint collection of simple closed curves preserved pointwise by $\Z^r$ on $\Sigma=\Sigma_g$.  Observe that each component of $\Sigma\setminus \mathcal{C}$ is a subsurface with negative Euler characteristic, and at worst $\mathcal{C}$ provides a pants decomposition for $\Sigma$.  Armed with this observation, we are now able to prove Theorem \ref{Main}:

\begin{proof} Assume that $\pi_1(X)=E=S_g\rtimes \Z^r$, where $X$ is a compact K\"ahler manifold.  There are two cases to consider depending on whether $\mathcal{C}$ is empty.  If $\mathcal{C}$ is empty, then after changing the basis for $\Z^r$, we have that  $e_1$ acts as a pseudo-Anosov on $\Sigma$ and $E\cong (S_g\rtimes \Z)\times \Z^{r-1}$.  Since $g\geq 2$, a well-known theorem of Thurston (see \cite{FaMa12}, Theorem 13.4) implies that $S_g\rtimes \Z$ is the fundamental group of a closed hyperbolic 3-manifold.  Hence we may project $\pi:E\twoheadrightarrow S_g\rtimes \Z$.  Theorem \ref{hyperbolic} implies that $\pi$ must factor through a surface group, but $\pi|_{S_g\rtimes \Z}$ is an isomorphism, so $S_g\rtimes \Z$ must be a subgroup of a surface group.  Since the latter has cohomological dimension 2, while $S_g\rtimes \Z$ has cohomological dimension 3, this is impossible.  

Thus we may assume $\mathcal{C}$ is non-empty.  Consider the components of $\Sigma\setminus N(\mathcal{C})=W_1\coprod\cdots\coprod W_k$.  As observed, each $W_i$ is a surface with non-empty boundary and Euler characteristic $\leq -1$, where $\chi(W_i)=-1$ only if $W_i$ is a pair of pants.  Given a subsurface $W_i$ with boundary $C_1,\ldots,C_n$, define the $\overl{W_i}$ to be the surface obtained from $W_i$ by capping each $C_i$ with a disk.  Let $g_i$ be the genus of $\overl{W_i}$ and consider the complement $W_i'=\Sigma\setminus W_i$.  There is a quotient of $S_g$ obtained by surgering disks along curves in $W'$ so that it kills the fundamental group of $W'$. This provides a surjection from $S_g$ onto $S_{g_i}=\pi_1(\overl{W_i})$.  Let $K_i$ be the kernel of this homomorphism.  Since the action of $\Z^r$ preserves the boundary of $W_i$ pointwise,  $K_i\leq S_g$ is normal in $E$, and hence we obtain a quotient map $\kappa:E\rightarrow Q$ and a short exact sequence:
\[1\rightarrow K_i\rightarrow E\xrightarrow{\kappa} Q\rightarrow 1.\]
Observe that $K_i\cap\Z^r=\{1\}$, hence $\Z^r$ passes isomorphically to $Q$.  Let $V_i\leq \Z^r$ be the subgroup which acts trivially on $W_i$.  Note that because there can be at most one pseudo-Anosov acting on $W_i$, $V_i$ is a direct summand of $\Z^r$ isomorphic to either $\Z^r$ or $\Z^{r-1}$. Hence, we see that $Q$ splits as a product $Q=\left(S_{g_i}\rtimes(\Z^r/V)\right)\times V$.  

Even if some element of $\Z^r$ acts on $W_i$ by a pseudo-Anosov, after capping, the action of $\Z^r/V$ on $\overl{W_i}$ may no longer be pseudo-Anosov.  For example, if $W_i$ has a single boundary component, a theorem of Kra \cite{Kra81} states one can obtain a pseudo-Anosov by point-pushing along a curve which fills the surface.  Rel boundary this is pseudo-Anosov, but after capping, it follows from the Birman exact sequence \cite{B69} that the action is trivial.  However, we do get the same action on homology.  

\begin{lemma}\label{sameHomology}Let $\phi$ be a diffeomorphism of a compact, orientable surface $W$ fixing $\partial W$ pointwise, and let $\overl{\phi}$ be the induced map on $\overl{W}$.  Then there is a commutative diagram 
\[\xymatrix{H_1(W)\ar[d] \ar[r]^{\phi}& H_1(W)\ar[d]\\
H_1(\overl{W}) \ar[r]^{\overl{\phi}}&H_1(\overl{W})}\]
\end{lemma}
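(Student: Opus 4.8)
The plan is to produce $\overl{\phi}$ concretely, as the extension of $\phi$ by the identity over the capping disks, and then read off the commutativity of the square purely from functoriality of $H_1$. The two vertical maps in the diagram are the ones induced by the inclusion $\iota\colon W\hookrightarrow\overl{W}$, so the content of the lemma is just that $\iota$ intertwines $\phi$ with its natural extension.

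First I would normalize $\phi$: since $\phi$ fixes $\partial W$ pointwise, a standard argument isotopes $\phi$, rel $\partial W$, to a diffeomorphism that is the identity on a collar neighborhood $\partial W\times[0,1)$. This changes neither the class of $\phi$ in $\Mod(W)$ nor the induced maps $\phi_*$ and $(\overl\phi)_*$, so we lose nothing. Writing $\overl{W}=W\cup_{\partial W}\coprod_j D_j$ with the $D_j$ the capping disks, I then define $\overl\phi\colon\overl{W}\to\overl{W}$ by $\overl\phi|_W=\phi$ and $\overl\phi|_{D_j}=\id_{D_j}$. Because $\phi$ is the identity near $\partial W$, the two pieces glue to a well-defined diffeomorphism of $\overl{W}$, and it represents exactly the mapping class $\overl\phi$ appearing in the discussion before the lemma (any other representative differs by an isotopy, which is irrelevant on homology).

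Finally, by construction $\overl\phi\circ\iota=\iota\circ\phi$ as maps $W\to\overl{W}$, since both restrict to $\phi$ on $W$. Applying the homology functor $H_1(-)$ (with any coefficients) to this equality of continuous maps yields $(\overl\phi)_*\circ\iota_*=\iota_*\circ\phi_*$, which is precisely the asserted commutativity. There is essentially no obstacle here; the only step meriting a moment's care is the reduction to $\phi$ being the identity on a collar, which is what makes the extension smooth rather than merely continuous, and even that can be bypassed since only $H_1$ is at stake.
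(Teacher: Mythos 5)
Your proof is correct, and it takes a slightly different route from the one in the paper. You construct $\overl{\phi}$ explicitly: isotope $\phi$ rel $\partial W$ to be the identity on a collar, extend by the identity over the capping disks, observe that $\overl{\phi}\circ\iota=\iota\circ\phi$ on the nose, and invoke functoriality of $H_1$. The paper instead argues algebraically: it identifies the subgroup $B\subset H_1(W)$ generated by the boundary curves as the kernel of $\iota_*\colon H_1(W)\to H_1(\overl{W})$, notes that $\phi$ fixes $B$ pointwise because it fixes $\partial W$, and concludes that $\phi_*$ descends to the quotient $H_1(W)/B\cong H_1(\overl{W})$, where the descended map is $(\overl{\phi})_*$. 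Your argument is the more self-contained of the two (the paper leaves implicit the final identification of the descended map with $(\overl{\phi})_*$, which ultimately rests on the same relation $\overl{\phi}\circ\iota=\iota\circ\phi$ that you make explicit), while the paper's version has the side benefit of recording exactly what the kernel of the vertical map is --- namely the boundary classes --- which is the structural fact used immediately afterward to conclude that the $\Z^r/V$-action on $H_1(\overl{W_i})$ ``is the same'' as on $H_1(W_i)$. Either proof suffices for the application.
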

\begin{proof} Let $B\subset H_1(W)$ be the subgroup generated by the boundary curves.  Since $\phi$ acts trivially on the boundary, it preserves the summand $B$ pointwise. But $B$ is exactly the kernel of the map $H_1(W)\rightarrow H_1(\overl{W})$.
\end{proof}

From the lemma we see that on the level of homology, the action of $\Z^r/V$ on $\overl{W_i}$ is the same as on $W_i$.  We now proceed as follows.  First suppose that $\Z^r$ acts as a pseudo-Anosov on some subsurface $W_1$. After changing the basis for $\Z^r$, we may suppose that $e_1$ acts as a pseudo-Anosov on the interior of $W_1$, and every other basis element acts as the identity.  
By passing to a cover, we can guarantee that by Proposition \ref{lift}, the genus of $\overl{W_1}$ is at least 2, and by Theorem \ref{InfiniteOrder}, the action of $e_1$ on $H_1(W_1)$ has infinite order.  Now we cap $W_1$ to get $\overl{W_1}$, together with an induced homeomorphism $\overl{e_1}$.  If $\overl{e_1}$ is still pseudo-Anosov on $\overl{W_1}$, then the discussion above implies that $E\twoheadrightarrow S_{g_1}\rtimes(\Z^r/V)=S_{g_1}\rtimes\Z$. The latter is the fundamental group of a closed hyperbolic 3-manifold, hence by Theorem \ref{hyperbolic}, we have a factorization \begin{equation}\label{Factorization}\begin{aligned} \xymatrix{
E\ar[dr]_{\psi}\ar[rr]^\pi &&  S_{g_1}\rtimes \Z\\
 & S_h\ar[ur]}
\end{aligned}
\end{equation} for some $h\geq 2$. We claim that no such factorization exists. 

\begin{lemma}\label{NoFactorization} There does not exist $\psi$ making diagram $(\ref{Factorization})$ commute.  
\end{lemma}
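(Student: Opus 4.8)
The plan is to suppose a $\psi$ making diagram~(\ref{Factorization}) commute exists, write $\theta\colon S_h\to S_{g_1}\rtimes\Z$ for the other edge (so that $\theta\circ\psi=\pi$), and derive a contradiction from the finitely generated subgroup $G:=\langle\psi(S_g),\psi(e_1)\rangle=\psi\bigl(S_g\rtimes\langle e_1\rangle\bigr)\leq S_h$. Since $\pi$ is onto, so is $\theta$, and $\ker\psi\subseteq\ker\pi$; as $\pi|_{S_g}$ is the capping surjection $S_g\twoheadrightarrow S_{g_1}$ with kernel $K_1$, this gives $\ker(\psi|_{S_g})\subseteq K_1$. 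The feature distinguishing this from the $\mathcal{C}=\emptyset$ case is that $\pi|_{S_g\rtimes\langle e_1\rangle}$ is in general \emph{not} injective (since $K_1\neq 1$ once $\mathcal{C}$ is nonempty), so one cannot just embed a cohomological-dimension-$3$ group into $S_h$; instead one uses that $\psi$ still carries the ``$\Z$-direction'' $\langle e_1\rangle$ to an honest $\Z$-quotient.

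I would then record the following about $\overl{S}:=\psi(S_g)\leq G$. It is finitely generated, being a quotient of $S_g$, and nontrivial, since $\theta(\overl{S})=\pi(S_g)=S_{g_1}\neq 1$ (after passing to a cover via Proposition~\ref{lift} we may assume $g_1\geq2$). It is normal in $G$, because $S_g\triangleleft E$; moreover $G/\overl{S}$ is cyclic, generated by the image of $\psi(e_1)$, while $\theta$ induces a surjection $G/\overl{S}\twoheadrightarrow(S_{g_1}\rtimes\Z)/S_{g_1}\cong\Z$, whence $G/\overl{S}\cong\Z$. Finally $\theta$ maps $G$ onto $S_{g_1}\rtimes\Z$, so $G$ is nonabelian. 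Since $G$ is a finitely generated subgroup of the closed surface group $S_h$, it is either a finitely generated free group or a finite-index closed surface subgroup, and nonabelianity forces $\chi(G)<0$ in either case.

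To finish: the extension $1\to\overl{S}\to G\to\Z\to1$ splits ($\Z$ being free), and $\overl{S}$, as a subgroup of infinite index in $G$, is free; with finite generation this makes $\overl{S}\cong F_k$ for some finite $k$, so $G\cong F_k\rtimes\Z$. Multiplicativity of the Euler characteristic then gives $\chi(G)=\chi(F_k)\,\chi(\Z)=0$, contradicting $\chi(G)<0$. (When $G$ is a closed surface group, this is the familiar fact that a closed orientable surface of genus $\geq 2$ does not fiber over the circle.) I expect the only fussy part to be the elementary bookkeeping ensuring $\ker(\psi|_{S_g})\subseteq K_1$, so that $\overl{S}$ really is a finitely generated quotient of $S_g$; after that the fibering obstruction closes the argument.
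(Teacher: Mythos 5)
Your argument is correct, but it reaches the contradiction by a genuinely different route than the paper. The paper identifies the kernel: it argues that everything commuting with $e_1$ (in particular $V$ and the loops supported away from $W_1$) must die under $\psi$ because $S_h$ contains no $\Z^2$, concludes $\ker\psi=\ker\pi$, and then gets the contradiction from the impossibility of embedding the cohomological-dimension-$3$ group $S_{g_1}\rtimes\Z$ into the cohomological-dimension-$2$ group $S_h$. You instead work entirely with the image: $G=\psi(S_g\rtimes\langle e_1\rangle)$ is a finitely generated, nonabelian subgroup of $S_h$ (nonabelian because $\theta(G)=S_{g_1}\rtimes\Z$), hence free of rank $\geq 2$ or a finite-index closed surface subgroup, so $\chi(G)<0$; yet $G$ is an extension of $\Z$ by the finitely generated free group $\overl{S}=\psi(S_g)$, so multiplicativity of the Euler characteristic gives $\chi(G)=0$. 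Your version has the advantage of sidestepping the kernel computation altogether --- note that the paper's assertion that any element commuting with $e_1$ lies in $\ker\psi$ really only shows, via cyclicity of centralizers in $S_h$, that its image lies in a common cyclic subgroup with $\psi(e_1)$, so some extra care is needed there --- at the mild cost of invoking the dichotomy for finitely generated subgroups of surface groups and the multiplicativity of $\chi$ in extensions. Also, as you suspected, the condition $\ker(\psi|_{S_g})\subseteq K_1$ is not actually needed anywhere: finite generation and nontriviality of $\overl{S}$ are immediate from $\overl{S}$ being a quotient of $S_g$ surjecting onto $S_{g_1}$.
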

\begin{proof} First observe that $\langle e_1\rangle$ injects into $S_h$.  Since $S_h$ does not contain any $\Z^2$ subgroups, any element of $E$ that commutes with $e_1$ must lie in $\ker\psi$. In particular, $V\subset \ker \psi$, as well as any element of $S_g$ represented by a curve which is freely homotopic to some curve disjoint from $W_1$.  It follows that $\ker\pi\subseteq \ker\psi$, hence $\ker\pi=\ker\psi$. Hence, in order for $\psi$ to exist we must have that $S_{g_1}\rtimes \Z\leq S_h$, which is impossible as the latter has cohomological dimension 2, while the former has cohomological dimension 3.  
\end{proof}

Therefore, if $\overl{e_1}$ is still pseudo-Anosov on $\overl{W_1}$, then Theorem \ref{hyperbolic} implies we have a factorization as in diagram (\ref{Factorization}), which is impossible by Lemma \ref{NoFactorization}. Otherwise, the action of $\overl{e_1}$ on $\overl{W_1}$ has infinite order in homology but is reducible.  By Lemma \ref{sameHomology}, the action of $e_1$ on $H_1(W_1)$ has infinite order but reducible characteristic polynomial.  If the characteristic polynomial has some irreducible factor which is not linear, then the action of a power of $e_1$ preserves some connected subsurface $W_1'\subset W_1$, and acts as a pseudo-Anosov on that subsurface, by the Casson--Bleiler homological criterion \cite{CaBl88}.  Now we pass to a finite cover so that the genus of $W_1'$ is at least 2.  Capping $W_1$ then subsequently $W_1'$, we know that the action of $\overl{e_1}$ on $\overl{W_1}'$ is pseudo-Anosov. In this case we also obtain a quotient of $E$ which is the fundamental group of a closed hyperbolic 3-manifold, which is a contradiction by the same argument as above. 

If the action of $\overl{e_1}$ is unipotent, then on the level of homology, $\overl{e_1}$, and hence $e_1$ looks like a multitwist which acts non-trivially on homology.  In this case, we consider the exact sequence \[1\rightarrow S_g\rtimes V\rightarrow E\rightarrow \Z\langle e_1\rangle\rightarrow 1.\]
Since $V$ acts trivially on $H_1(W_1)$, we know that $H_1(W_1)\hookrightarrow H_1(S_g\rtimes V)$ as a direct summand. Denote by $e_1^*$ the action of $e_1$ on $H_1(S_g\rtimes V)$.  The action of $e_1^*$ preserves $H_1(W_1)$ and if we apply Theorem \ref{NoJordan}, we see that the action of $e_1$ on $H_1(W_1;\C)$ cannot have any Jordan blocks of size $>1$ associated to the eigenvalue $1$.  On the other hand, $e_1^*$ is unipotent, hence every eigenvalue is 1.  It follows that $e_1^*-I=0$ on $H_1(W_1;\C)$, and contradicting the assumption that $e_1^*$ has infinite order.  

At this point, we may assume no element of $\Z^r$ acts as a pseudo-Anosov.  In this case, $\Z^r$ acts purely by multitwists and it is not hard to build a cover where $\Z^r$ acts non-trivially on $H_1$.  Alternatively, identifying $\Aut(S_g)$ with $\Mod(\Sigma_{g,1})$ we may apply Hadari's theorem again to get an infinite order action by multitwists on homology. Choose a symplectic basis $\langle a_1,\ldots a_g, b_1, \ldots, b_g\rangle$ for $H_1(\Sigma)$ which contains all of the non-separating curves that $\Z^r$ twists along. With respect to this basis, we see that the image of the representation of $\Z^r$ in $\Sp_{2g}(\Z)$ is block upper triangular of the form \[\left(\begin{array}{cc} I_g & A\\
0 & I_g\end{array}\right).\]
where $A^t=A$ is symmetric. For a basis $e_1,\ldots ,e_r$ for $\Z^r$, let $A_1,\ldots A_r$ be the corresponding upper right blocks under the representation.  Using the Euclidean algorithm, we can change our basis for $\Z^r$ so that the following two conditions hold.
\begin{enumerate}
\item Either for some $i$, $(A_1)_{ii}\neq0$, or for some $i\neq j$,  $(A_1)_{ij}=(A_1)_{ji}\neq0$.
\item For all $2\leq k\leq r$, correspondingly we have $(A_k)_{ii}=0$ or $(A_k)_{ij}=(A_k)_{ji}=0$.  
\end{enumerate}
Without loss of generality, assume that either $(A_1)_{11}\neq 0$ or $(A_1)_{12}\neq 0$.  We will eliminate the case when $(A_1)_{11}\neq 0$; the other is similar. For each $j>1$, consider the action of $\Z^r$ on the symplectic subspace $L_{1j}=\langle a_1,b_1, a_j,b_j\rangle$.  Any free abelian subgroup acting on this subspace faithfully has rank $\leq 3$.  Therefore, after changing our basis once again, there are at most 3 basis vectors including $e_1$ which act non-trivially on $L_{1j}$. If $e_1$ is the only basis vector that acts non-trivially on $L_{1j}$, then if we consider the exact sequence
\[1\rightarrow S_g\rtimes V\rightarrow E\rightarrow \Z\langle e_1\rangle\rightarrow 1,\]
we know that $L_{1j}\hookrightarrow H_1(S_g\rtimes V;\C)$.  Hence $H_1(S_g\rtimes V;\C)$ will have a subspace of dimension at least 2 on which $e_1$ acts as a non-trivial Jordan block of size at least 2, and eigenvalue 1. In this case we may apply Theorem \ref{NoJordan} to conclude that $E$ cannot be K\"ahler. Similarly, if for some $j$, there are exactly two basis vectors which act non-trivially on $L_{1j}$, say $e_1$ and $e_2$, then $(L_{1j}/\langle e_2)\rangle\otimes \C$ will be three dimensional, hence the image of $L_{1j}$ in $H_1(S_g\rtimes V;\C)$ will still be a subspace on which $e_1$ acts as a non-trivial Jordan block of size at least 2 and eigenvalue 1. 

In the final case, for some $j$ we have exactly 3 non-trivial basis vectors, $e_1$, $e_2$ and $e_3$ say, acting on $L_{1j}$.  After changing the basis for $K=\langle e_1,e_2, e_3\rangle$, we can arrange that $(A_1)_{11}\neq0$, $(A_2)_{1j}\neq0$ and $(A_3)_{jj}\neq 0$, while all other entries among these three generators are 0.  Now we are in the situation of Proposition \ref{PantsLift}, since on $V_{1j}$, the generator $e_1$ acts as a power of a Dehn twist about $[a_1]$, $e_2$ acts as a power of a Dehn twist along $[a_1]+[a_j]$ and $e_3$ acts as a power of a Dehn twist along $[a_j]$. Applying Proposition \ref{PantsLift}, we pass to a finite index subgroup $\widetilde{E}$ of $E$, and correspondingly, a finite index subgroup $\widetilde{K}$ of $K$. Write $\widetilde{E}=S_{\tilde{g}}\rtimes\Z^r$, and let $\widetilde{V}$ be $\langle e_1, e_3,\ldots, e_k\rangle \cap \widetilde{E}$.   
Let $\widetilde{e_2}$ be the smallest multiple of $e_2$ contained in $\widetilde{K}$.  Then Proposition \ref{PantsLift} implies that if we consider the exact sequence \[1\rightarrow S_{\tilde{g}}\rtimes\widetilde{V}\rightarrow \widetilde{E}\rightarrow \Z\langle\widetilde{e_2}\rangle\rightarrow 1,\]
the action of $\widetilde{e_2}$ on $H_1(S_{\tilde{g}}\rtimes\widetilde{V};\C)$ will have Jordan blocks of size at least 2 and with eigenvalue 1.  This contradicts Theorem \ref{NoJordan}, eliminating this case as well.

Therefore, we must have that the image of $\Z^r$ in $\Mod(\Sigma)$ is finite. Hence, after passing to a finite index subgroup $\widetilde{E}$, we have that $\Z^r$ acts trivially on $\pi_1(\Sigma)$, and $\widetilde{E}=S_g\times \Z^r$ is a product.  The fact that $E$ was assumed to be K\"ahler implies that the first Betti number of $\widetilde{E}$ is even.  Thus, $2g+r$ is even, and so $r$ is also even, as desired.
\end{proof}

\section{Nielsen Realization and Virtual Products}
In this section we prove Theorem \ref{Realization}, which provides a partial converse to Theorem \ref{Main}. First we recall some background on the Nielsen realization problem and Kerckhoff's subsequent solution.  Recall that by the Dehn--Nielsen--Baer theorem, the mapping class group $\Mod(\Sigma)$ can be identified with an index 2 subgroup of $\Out(S_g)$. Geometrically, $\Mod(\Sigma)$ is the group of orientation-preserving outer automorphisms, i.e. those which induce the identity on $H_2(S_g)$. Since $\Sigma$ is a $K(S_g,1)$, we can further identify $\Out(S_g)$ with $\HE(\Sigma)$, the set of self-homotopy equivalences of $\Sigma$.  A homeomorphism $f:\Sigma\rightarrow \Sigma$ is said to \emph{realize} $\phi\in \Out(S_g)$ if the homotopy class $[f]\in \HE(\Sigma)$ is the same as $\phi$ under the above identification.  

Given a finite subgroup $F\leq \Mod(\Sigma)$, Nielsen asked whether there always exists a Riemann surface $C$ of genus $g$ on which $F$ is realized as a group of isometries. This question became known as the \emph{Nielsen realization problem}.  For $g=1$, any finite subgroup of $\Mod_1\cong \SL_2(\Z)$ is isomorphic to a subgroup of $\Z/6\Z$ or $\Z/4\Z$.  Using the fundamental domain for the action of $\SL_2(\Z)$ on the upper half-plane $\Hy^2$, it is easy to see that indeed $\Z/4\Z$ and $\Z/6\Z$ are realized as the full isometry group of the square and hexagonal torus, respectively.  In a groundbreaking paper, Kerckhoff solved the Nielsen realization problem for $g\geq 2$:

\begin{thm}\label{Kerckhoff}$($Kerckhoff \cite{Ker83}$)$ Given any finite subgroup $F\leq \Mod(\Sigma)$, where $g\geq 2$, there exists a hyperbolic surface $C$ such that $F$ is realized as a subgroup of $\Isom^+(C)$.
\end{thm}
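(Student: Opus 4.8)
The plan is to deduce Theorem~\ref{Kerckhoff} from the action of $\Mod(\Sigma)$ on the Teichm\"uller space $\mathcal{T}(\Sigma)$ by exhibiting a point fixed by the finite subgroup $F$. Recall that a point of $\mathcal{T}(\Sigma)$ is a marked hyperbolic structure on $\Sigma$, that $\Mod(\Sigma)$ acts by changing the marking, and that for $g\geq 2$ the stabilizer of a point $[X]$ is canonically $\Isom^+(X)$. Hence it suffices to find a fixed point of $F$ in $\mathcal{T}(\Sigma)$: such a point yields a hyperbolic (equivalently, by uniformization, Riemann) surface $C$ of genus $g$ together with an embedding $F\hookrightarrow \Isom^+(C)$ realizing the given mapping classes. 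Since $\mathcal{T}(\Sigma)$ is contractible (indeed homeomorphic to $\R^{6g-6}$) but the $F$-action is a genuine deformation of the marked hyperbolic structure, a fixed point does not exist for formal reasons; the content of the theorem is geometric.

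The geometric input is Thurston's earthquake theory, via two facts: (i) any two points of $\mathcal{T}(\Sigma)$ are joined by a left earthquake path $t\mapsto E_{t\nu}$, $t\in[0,1]$, along a suitable measured geodesic lamination $\nu$; and (ii) \emph{(Kerckhoff's convexity theorem)} for every measured lamination $\mu$ the geodesic-length function $\ell_\mu:\mathcal{T}(\Sigma)\to\R_{>0}$ is convex along every earthquake path, and strictly convex along the $\nu$-earthquake path whenever $\mu$ and $\nu$ together fill $\Sigma$. Granting these, I would proceed as follows. Choose a filling measured lamination $\lambda$ (for instance a suitably weighted union of simple closed curves) and average it over $F$ to get $\mu=\sum_{f\in F}f_*\lambda$, which is $F$-invariant and still filling; then $\ell_\mu$ is an $F$-invariant, continuous, and \emph{proper} function on $\mathcal{T}(\Sigma)$ --- properness following by combining the collar-lemma estimate (a filling lamination crosses every short curve and must traverse its wide collar, forcing $\ell_\mu$ to blow up) with Mumford compactness to control both the moduli-space and the mapping-class-group directions. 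Thus $\ell_\mu$ attains its minimum. If $p\neq q$ were two minimizers, I would join them by an earthquake path along some $\nu$; since $\mu$ fills, $\ell_\mu$ is strictly convex along this path, so at an interior point it takes a value strictly below $\ell_\mu(p)=\ell_\mu(q)$, a contradiction. Hence the minimizer is unique, and being invariant under the $\ell_\mu$-preserving homeomorphisms in $F$, it is fixed by $F$. This fixed point is the desired surface $C$.

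The main obstacle is step (ii): the strict convexity of length functions along earthquakes. This rests on the first-variation (``cosine'') formula for $\ell_\mu$ under an earthquake along $\nu$, generalizing Wolpert's twist--length derivative formula, together with a delicate analysis showing the second variation is strictly positive when $\mu$ and $\nu$ jointly fill; Thurston's earthquake theorem (i) is also nontrivial, though by now standard. I would note an alternative route that avoids earthquakes entirely: by Wolpert the Weil--Petersson metric on $\mathcal{T}(\Sigma)$ has negative sectional curvature and is geodesically convex, and (Yamada) its metric completion is a complete $\mathrm{CAT}(0)$ space on which $F$ acts by isometries, so the Bruhat--Tits circumcenter argument produces an $F$-fixed point; one then must check that the fixed point lies in the open stratum $\mathcal{T}(\Sigma)$ rather than on a nodal boundary stratum (which would only realize $F$ on a lower-complexity surface), which can again be arranged by feeding in $F$-invariant filling length data that makes the relevant function proper. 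In either approach the real work is showing that a natural $F$-invariant \emph{energy} on $\mathcal{T}(\Sigma)$ has a unique minimum.
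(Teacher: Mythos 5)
The paper does not prove this statement at all: Theorem~\ref{Kerckhoff} is quoted as an external black box from Kerckhoff's 1983 paper and used only as an ingredient in the proof of Theorem~\ref{Realization}, so there is no in-paper argument to compare against. What you have written is, in outline, Kerckhoff's own published proof: realize $F$-invariance by minimizing an $F$-invariant, proper, strictly convex length function on Teichm\"uller space, with convexity measured along earthquake paths and connectivity of $\mathcal{T}(\Sigma)$ by earthquakes supplied by Thurston's earthquake theorem. Your alternative route via the Weil--Petersson metric (Wolpert's convexity, the $\mathrm{CAT}(0)$ completion, and the circumcenter argument, with the caveat about landing on a nodal stratum) is also a genuine and well-known second proof. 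So the strategy is right and you have correctly identified where all the difficulty lives.

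Two caveats. First, and most importantly, this is a proof outline rather than a proof: the entire content of the theorem is concentrated in step (ii), the strict convexity of $\ell_\mu$ along earthquake paths when $\mu$ and the earthquake lamination jointly fill, and you explicitly do not establish it. As submitted, the argument reduces Nielsen realization to a statement that is at least as deep as the theorem itself; that is a legitimate way to organize a proof, but it should be presented as a reduction, not a proof. Second, a small technical wrinkle: the ``average'' $\mu=\sum_{f\in F}f_*\lambda$ is not literally a measured lamination when the supports of the $f_*\lambda$ cross (which they typically do). The standard fix is either to work with the $F$-invariant \emph{function} $\Phi=\sum_{f\in F}\ell_{f_*\lambda}$ --- a sum of convex functions, strictly convex along every earthquake path because the filling summand $\ell_\lambda$ already is, and proper for the same reason --- or to interpret the sum as a geodesic current to which the length function extends. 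Either repair is routine, but as written the object $\ell_\mu$ is not defined.
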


We also require a result of Serre about finite K\"ahler groups. 
\begin{thm} \label{Serre}$($Serre \cite{Ser58}$)$ Every finite group $F$ is the fundamental group of some compact K\"ahler manifold.  
\end{thm}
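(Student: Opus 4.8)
The plan is to realize $F$ as the fundamental group of the free quotient of a simply connected smooth projective variety; this is essentially the classical argument reproduced in \cite{ABCKT96}. First I would let $F$ act on the projective space $\mathbb{P}(V)$ of a complex representation $V$ whose projectivization is faithful — the left regular representation $V=\C[F]$ works, since a nonidentity element of $F$ permutes the standard basis without fixed points and hence cannot act on $V$ as a scalar. Replacing $V$ by a direct sum of $m$ copies if necessary, I may take $n=\dim\mathbb{P}(V)$ as large as I like while the \emph{non-free locus} $B=\bigcup_{1\neq g\in F}\mathrm{Fix}(g)\subseteq\mathbb{P}(V)$ stays a finite union of proper linear subspaces, each of codimension at least $m$, so that $\dim B\leq n-m$.

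Next I would cut out $X\subseteq\mathbb{P}(V)$ as a generic $F$-invariant complete intersection of $c=n-\dim B-1$ hypersurfaces of a large common degree $d$, with defining equations taken in the invariant linear system $(\mathrm{Sym}^d V^*)^F$; for $d$ a large multiple of $|F|$ this system is base-point-free (average $\prod_{g\in F}(g\cdot\ell)$ over generic linear forms $\ell$), and for $d$ larger still it defines a morphism that factors as $\mathbb{P}(V)\to\mathbb{P}(V)/F\hookrightarrow\mathbb{P}^N$. Taking a generic linear section $\Lambda$ of $\mathbb{P}(V)/F$ inside $\mathbb{P}^N$ and pulling back, Bertini applied on the quotient away from its singular locus — which is precisely the image of $B$ — together with the dimension count $\mathrm{codim}\,X=c>\dim B$ shows that $X$ is smooth, disjoint from $B$, and (for $m$ large) of dimension $\geq 2$; in particular $F$ acts freely on $X$. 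The Lefschetz hyperplane theorem, iterated along the complete intersection, then forces $X$ to be connected and $\pi_1(X)\cong\pi_1(\mathbb{P}(V))=1$.

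Finally, $Y:=X/F$ is smooth (a free action of a finite group on a manifold) and projective (a power of an $F$-linearized ample line bundle descends), hence a compact Kähler manifold, and the regular covering $X\to Y$ yields $1\to\pi_1(X)\to\pi_1(Y)\to F\to 1$ with $\pi_1(X)=1$, so $\pi_1(Y)\cong F$. The main obstacle I anticipate is arranging $X$ to meet all the competing requirements simultaneously — $F$-invariant, smooth, of large dimension, and avoiding the non-free locus $B$. The cleanest way around this is not to run an equivariant Bertini argument by hand on $\mathbb{P}(V)$, but to pass to the possibly singular quotient $\mathbb{P}(V)/F$, take generic linear sections there away from the branch locus, and pull back, so that the only genuinely delicate bookkeeping is verifying that a suitable ample linear system on the quotient lifts to an invariant system upstairs.
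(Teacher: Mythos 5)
The paper does not prove this statement at all --- it simply cites Serre --- so there is nothing internal to compare against; what you have written is the classical Serre/complete-intersection construction (the one reproduced in \cite{ABCKT96}), and in outline it is correct: faithful projectivized representation, invariant smooth complete intersection of dimension $\geq 2$ avoiding the non-free locus, Lefschetz to kill $\pi_1$, then quotient freely.

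There is, however, one quantitative slip you should repair: your choice of $c=n-\dim B-1$ hypersurfaces does not deliver the inequality $\mathrm{codim}\,X=c>\dim B$ that you invoke to make $X$ miss $B$. For $V=\C[F]^{\oplus m}$ with $F$ containing an involution $g$, the $(+1)$-eigenspace of $g$ has dimension $m|F|/2$, so $\dim B=m|F|/2-1$ while $n=m|F|-1$; hence $n-\dim B-1=\dim B$ exactly, \emph{for every} $m$, and a generic linear section of the quotient of codimension $\dim B$ still meets the image of $B$ in finitely many points, so $F$ need not act freely on $X$. The bound $\dim B\leq n-m$ does not save you, because $\dim B$ grows linearly in $m$ just as $n$ does. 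The fix is to decouple the two requirements: you need $\dim B+1\leq c\leq n-2$, i.e.\ $\dim B\leq n-3$, which for the $m$-fold regular representation holds as soon as $m|F|\geq 6$; taking $c=\dim B+1$ then gives $X$ disjoint from $B$ with $\dim X=n-\dim B-1\geq 2$, and the rest of your argument (Bertini on $\mathbb{P}(V)/F$ away from the branch locus, \'etale pullback, Lefschetz for smooth complete intersections of dimension $\geq 2$, descent of an ample bundle to $X/F$) goes through verbatim.
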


Putting these two results together, we are able to prove Theorem \ref{Realization}.
\begin{proof}Let $A$ be any finitely generated abelian group of even rank, $\rho:A\rightarrow \Mod(\Sigma_g)$ be any homomorphism with finite image and $E$ the associated extension.  By the fundamental theorem for finitely generated abelian groups, we can decompose $A$ as $\Z^{2s}\times T$, where $T$ is the torsion subgroup of $A$.  By Theorem \ref{Serre}, there exists a compact K\"ahler manifold $X$ with $\pi_1(X)\cong A$.  Let $\widetilde{X}$ be the universal cover of $X$, and let $\omega_{\widetilde{X}}$ be the pullback of the K\"ahler form on $X$ to $\widetilde{X}$. The standard embedding of $\Z^{2s}\hookrightarrow \C^s$ gives an action of $\Z^{2s}$ on $\C^s$ by translation, which preserves the standard K\"ahler form $\omega_0$ on $\C^s$.Thus, $A$ acts freely, properly discontinuously, cocompactly on $\widetilde{Y}=\C^s\times \widetilde{X}$.  

By Theorem \ref{Kerckhoff}, there exists hyperbolic surface $C$ of genus $g$ on which $\rho(A)$ is realized as a group of isometries.  The volume form $\omega_C$ is a K\"ahler form on $C$ which is clearly invariant under the action of $\Isom^+(C)$. By an abuse of notation, we will also denote the representation $A\rightarrow \Isom^+(C)$ by $\rho$.  

To prove the theorem, we appeal to a K\"ahler variant of the Borel construction. Define $\widetilde{Z}=\widetilde{Y}\times C$. We then consider the diagonal action of $A$ on $\widetilde{Z}$, where $A$ acts on $\widetilde{Y}$ by deck transformations and on $C$ via $\rho$.  Since $A$ acts freely properly discontinuously cocompactly on $\widetilde{Y}$, the same is true of the action on $\widetilde{Z}$.  The quotient manifold $Z$ is clearly a complex manifold with $\pi_1(Z)\cong E$, and we claim it is in fact K\"ahler.  Indeed, consider the K\"ahler form $\omega_{\widetilde{Z}}=\omega_0+\omega_{\widetilde{X}} +\omega_C$ on $\widetilde{Z}$.  By construction, for every $a\in A$, we have \begin{align*} a^*(\omega_{\widetilde{Z}})&=a^*(\omega_0+\omega_{\widetilde{X}} +\omega_C)\\
&=a^*(\omega_0)+a^*(\omega_{\widetilde{X}}) +\rho(a)^*(\omega_C)\\
&=\omega_0+\omega_{\widetilde{X}} +\omega_C\\
&=\omega_{\widetilde{Z}}.
\end{align*}
Hence, $\omega_{\widetilde{Z}}$ descends to a K\"ahler form $\omega_Z$ on $Z$, which proves that $Z$ is K\"ahler.  For the final statement of the theorem, observe that in the case where $A$ is torsion free, $\widetilde{Z}=\C^s\times C$ is aspherical, hence $Z$ is as well.
\end{proof}

\bibliographystyle{plain}
\bibliography{Kahlerbib}

\begin{thebibliography}{10}

\bibitem{ABCKT96}
Jaume Amor{\'o}s, Marc Burger, Kevin Corlette, Dieter Kotschick, and Domingo
  Toledo.
\newblock {\em Fundamental groups of compact {K}\"ahler manifolds}, volume~44
  of {\em Mathematical Surveys and Monographs}.
\newblock American Mathematical Society, Providence, RI, 1996.

\bibitem{B69}
Joan~S. Birman.
\newblock Mapping class groups and their relationship to braid groups.
\newblock {\em Comm. Pure Appl. Math.}, 22:213--238, 1969.

\bibitem{BLM83}
Joan~S. Birman, Alex Lubotzky, and John McCarthy.
\newblock Abelian and solvable subgroups of the mapping class groups.
\newblock {\em Duke Math. J.}, 50(4):1107--1120, 1983.

\bibitem{Cam95}
F.~Campana.
\newblock Remarques sur les groupes de {K}\"ahler nilpotents.
\newblock {\em Ann. Sci. \'Ecole Norm. Sup. (4)}, 28(3):307--316, 1995.

\bibitem{CaTo89}
James~A. Carlson and Domingo Toledo.
\newblock Harmonic mappings of {K}\"ahler manifolds to locally symmetric
  spaces.
\newblock {\em Inst. Hautes \'Etudes Sci. Publ. Math.}, (69):173--201, 1989.

\bibitem{CaTo95}
James~A. Carlson and Domingo Toledo.
\newblock Quadratic presentations and nilpotent {K}\"ahler groups.
\newblock {\em J. Geom. Anal.}, 5(3):359--377, 1995.

\bibitem{CaBl88}
Andrew~J. Casson and Steven~A. Bleiler.
\newblock {\em Automorphisms of surfaces after {N}ielsen and {T}hurston},
  volume~9 of {\em London Mathematical Society Student Texts}.
\newblock Cambridge University Press, Cambridge, 1988.

\bibitem{DGMS75}
Pierre Deligne, Phillip Griffiths, John Morgan, and Dennis Sullivan.
\newblock Real homotopy theory of {K}\"ahler manifolds.
\newblock {\em Invent. Math.}, 29(3):245--274, 1975.

\bibitem{FaMa12}
Benson Farb and Dan Margalit.
\newblock {\em A primer on mapping class groups}, volume~49 of {\em Princeton
  Mathematical Series}.
\newblock Princeton University Press, Princeton, NJ, 2012.

\bibitem{GrMo81}
Phillip~A. Griffiths and John~W. Morgan.
\newblock {\em Rational homotopy theory and differential forms}, volume~16 of
  {\em Progress in Mathematics}.
\newblock Birkh\"auser, Boston, Mass., 1981.

\bibitem{Had15}
Asaf Hadari.
\newblock Every infinite order mapping class has an infinite order action on
  the homology of some finite cover.
\newblock Preprint: http://arxiv.org/abs/1508.01555, 2015.

\bibitem{K13}
Michael Kapovich.
\newblock Noncoherence of arithmetic hyperbolic lattices.
\newblock {\em Geom. Topol.}, 17(1):39--71, 2013.

\bibitem{Ker83}
Steven~P. Kerckhoff.
\newblock The {N}ielsen realization problem.
\newblock {\em Ann. of Math. (2)}, 117(2):235--265, 1983.

\bibitem{Kra81}
Irwin Kra.
\newblock On the {N}ielsen-{T}hurston-{B}ers type of some self-maps of
  {R}iemann surfaces.
\newblock {\em Acta Math.}, 146(3-4):231--270, 1981.

\bibitem{PS10}
Stefan Papadima and Alexander~I. Suciu.
\newblock Algebraic monodromy and obstructions to formality.
\newblock {\em Forum Math.}, 22(5):973--983, 2010.

\bibitem{Py14}
Pierre Py.
\newblock Some noncoherent, nonpositively curved k\"ahler groups.
\newblock To appear in L'Enseignement Math\'ematique (2016), available at
  arXiv:1410.8556, 2014.

\bibitem{Ser58}
Jean-Pierre Serre.
\newblock Sur la topologie des vari\'et\'es alg\'ebriques en caract\'eristique
  {$p$}.
\newblock In {\em Symposium internacional de topolog\'\i a algebraica
  {I}nternational symposium on algebraic topology}, pages 24--53. Universidad
  Nacional Aut\'onoma de M\'exico and UNESCO, Mexico City, 1958.

\end{thebibliography}

\end{document}